\def \de {\partial}
\def \e {\varepsilon}
\def \N {\mathbb{N}}
\def \O {\Omega}
\def \phi {\varphi}
\def \RN {\mathbb{R}^N}
\def \SN {S^{N-1}}
\def \R {\mathbb{R}}
\def \l {\lambda}
\def \la {\left\langle }
\def \ra {\right\rangle}
\def \div {\text{div}}
\def \tr {\text{tr}}
\def \deter {\text{det}}
\def \sdu {\sigma_2}
\newtheorem{theorem}{Theorem}[section]
\newtheorem{lemma}[theorem]{Lemma}
\newtheorem{proposition}[theorem]{Proposition}
\newtheorem{corollary}[theorem]{Corollary}
\newtheorem*{theoLP}{Theorem A (\cite{LiPa})}
\newtheorem{remark}[theorem]{Remark}
\theoremstyle{definition}
\numberwithin{equation}{section}
\begin{document}

\title[Overdetermined problems and CMC surfaces in cones]{Overdetermined problems and\\ constant mean curvature surfaces in cones}
\author[F. Pacella]{Filomena Pacella}
\address{Dipartimento di Matematica, Sapienza Universit\`a di Roma, P.le Aldo Moro 5 - 00185 Roma, Italy.
        }
 \email{pacella@mat.uniroma1.it}

\author[G. Tralli]{Giulio Tralli}
\address{Dipartimento di Matematica, Sapienza Universit\`a di Roma, P.le Aldo Moro 5 - 00185 Roma, Italy.
         }
 \email{tralli@mat.uniroma1.it}

\subjclass[2010]{35N25, 35B06, 53A10, 53A05}
\keywords{overdetermined elliptic problems, mixed boundary conditions, constant mean curvature surfaces}

\date{}

\begin{abstract}
We consider a partially overdetermined problem in a sector-like domain $\O$ in a cone $\Sigma$ in $\RN$, $N\geq 2$, and prove a rigidity result of Serrin type by showing that the existence of a solution implies that $\O$ is a spherical sector, under a convexity assumption on the cone.\\
We also consider the related question of characterizing constant mean curvature compact surfaces $\Gamma$ with boundary which satisfy a `gluing' condition with respect to the cone $\Sigma$. We prove that if either the cone is convex or the surface is a radial graph then $\Gamma$ must be a spherical cap.\\
Finally we show that, under the condition that the relative boundary of the domain or the surface intersects orthogonally the cone, no other assumptions are needed.
\end{abstract}
\maketitle

\section{Introduction}\label{intro}

\vskip 0.2cm

Let $\Sigma$ be an open cone in $\RN$, $N\geq 2$, with vertex at the origin $O$, i.e., denoting by $\omega$ an open connected domain on the unit sphere $\SN$ then
$$\Sigma=\{tx\,:\,x\in\omega,\,\,t\in(0,+\infty)\}.$$
A first question we consider in this paper is the study of a partially overdetermined problem in a sector-like domain $\O\subset\Sigma$ to the aim of showing a rigidity result of Serrin-type \cite{Ser}.\\
In connection with this we study constant mean curvature (CMC, in short) $(N-1)$-dimensional manifolds contained in $\Sigma$ with smooth boundary satisfying suitable `gluing' conditions with respect to $\de\Sigma$.

Let us set the problems and state precisely our results.\\
Given an open cone $\Sigma$ such that $\de\Sigma \smallsetminus\{O\}$ is smooth, we consider a bounded domain $\O\subset\Sigma$ and denote by $\Gamma$ its ``relative (to $\Sigma$)'' boundary, i.e. $\Gamma$ is the part of $\de\O$ which is contained in $\Sigma$. Then, setting $\Gamma_1=\de\O\smallsetminus\overline{\Gamma}$ and denoting by $\textsc{H}_{N-1}(\cdot)$ the $(N-1)$-dimensional Hausdorff measure, we will assume that $\textsc{H}_{N-1}(\Gamma_1)>0$, $\textsc{H}_{N-1}(\Gamma)>0$, and that $\Gamma$ is a smooth $(N-1)$-dimensional manifold, while $\de\Gamma=\de\Gamma_1\subset \de\Sigma\smallsetminus\{O\}$ is a smooth $(N-2)$-dimensional manifold. \\
Such a domain $\O$ will be called a \emph{sector-like domain} and we point out that the vertex $O$ needs not to be on $\Gamma_1$.\\
We define the partially overdetermined problem
\begin{equation}\label{serrintype}
\begin{cases}
   -\Delta u=1 & \mbox{ in }\O, \\
   u=0 & \mbox{ on }\Gamma,\\
	 \frac{\de u}{\de\nu}=-c<0 & \mbox{ on }\Gamma,\\
	\frac{\de u}{\de\nu}=0 & \mbox{ on }\Gamma_1\smallsetminus\{O\}.
\end{cases}
\end{equation}
Here and in what follows, $\nu=\nu_x$ is going to denote the exterior unit normal to $\de\O$ wherever is defined (that is for $x\in\Gamma\cup\Gamma_1\smallsetminus\{O\}$). When we write $\nu_x$ with $x\in\de\Gamma$ we actually mean that $\nu_x$ is the normal to $\overline{\Gamma}$, which is defined thanks to the smoothness of $\Gamma$ up to the boundary. Also $\left\langle \cdot,\cdot\right\rangle$ stands for the standard scalar product in $\RN$. We have the following
\begin{theorem}\label{unouno}
Let $c>0$ be fixed and assume that $\Sigma$ is a convex cone such that $\Sigma\smallsetminus\{O\}$ is smooth. If $\O$ is a sector-like domain and there exists a classical $C^2(\O)\cap C^1(\Gamma\cup \Gamma_1\smallsetminus\{O\})$-solution $u$ of problem \eqref{serrintype} such that $u\in W^{1,\infty}(\O) \cap W^{2,2}(\O)$ then
$$\O=\Sigma\cap B_{R}(p_0),\quad \mbox{ and }\quad u(x)=\frac{N^2c^2-|x-p_0|^2}{2N},$$
where $B_{R}(p_0)$ denotes the ball centered at a point $p_0\in\RN$ and radius $R=Nc$.\\
Moreover, one of the following two possibilities holds:
\begin{itemize}
\item[(i)] $p_0=O$;
\item[(ii)] $p_0\in\de\Sigma$ and $\Gamma$ is a half-sphere lying over a flat portion of $\de\Sigma$.
\end{itemize}
\end{theorem}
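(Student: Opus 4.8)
The plan is to adapt Weinberger's integral method to the mixed boundary-value setting, with convexity of $\Sigma$ entering precisely through the sign of one boundary term on $\Gamma_1$. First I would introduce the $P$-function
$$P:=|\nabla u|^2+\frac{2}{N}\,u,$$
and compute, using $\Delta u=-1$, the Bochner-type identity $\Delta P=2|D^2u|^2-\frac{2}{N}$. Since $|D^2u|^2\geq\frac{(\Delta u)^2}{N}=\frac1N$ by Cauchy--Schwarz, $P$ is subharmonic in $\O$, and equality in this inequality—hence $\Delta P\equiv0$—forces $D^2u\equiv-\frac1N\,\mathrm{Id}$, i.e.\ $u$ a radial quadratic. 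On $\Gamma$ the Dirichlet condition $u=0$ makes $\nabla u$ normal, so $|\nabla u|=c$ and $P\equiv c^2$ there.

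The next step is to establish $P\leq c^2$ throughout $\O$ by a maximum-principle argument, and here convexity is essential. On $\Gamma_1$ we have $\frac{\de u}{\de\nu}=0$, so $\nabla u$ is tangent to $\de\Sigma$; differentiating this relation tangentially and invoking the shape operator of $\de\Sigma$ I would obtain $\frac{\de P}{\de\nu}=-2\,\mathrm{II}(\nabla u,\nabla u)$ on $\Gamma_1$, where $\mathrm{II}$ denotes the second fundamental form of $\de\Sigma$ with respect to the exterior normal, normalized so that convexity means $\mathrm{II}\geq0$. Thus $\frac{\de P}{\de\nu}\leq0$ on $\Gamma_1$. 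Combined with subharmonicity and $P=c^2$ on $\Gamma$, the strong maximum principle together with the Hopf boundary-point lemma rules out both an interior maximum and a maximum on the relative interior of $\Gamma_1$, yielding $P\leq c^2$ in $\O$.

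To force equality I would next derive a Pohozaev--Rellich identity tailored to the cone. Multiplying $-\Delta u=1$ by $\la x,\nabla u\ra$ and integrating by parts produces the volume terms $\int_\O|\nabla u|^2$ and $\int_\O u$ together with boundary integrals weighted by $\la x,\nu\ra$. The crucial simplification is that, $\Sigma$ being a cone with vertex $O$, the position vector $x$ is tangent to $\de\Sigma$, so $\la x,\nu\ra=0$ on $\Gamma_1$; this annihilates every boundary contribution on $\Gamma_1$ in spite of the Neumann condition, leaving only the explicit term $\frac{c^2}{2}\int_\Gamma\la x,\nu\ra\,dS$ on $\Gamma$. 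Feeding in the two elementary identities $\int_\O u=\int_\O|\nabla u|^2$ (from $u(-\Delta u)=u$, the boundary terms vanishing on $\Gamma$ by $u=0$ and on $\Gamma_1$ by $\frac{\de u}{\de\nu}=0$) and $\int_\Gamma\la x,\nu\ra\,dS=N|\O|$ (divergence theorem, again using $\la x,\nu\ra=0$ on $\Gamma_1$), a short computation gives $\int_\O P\,dx=c^2|\O|$. Since $P\leq c^2$ pointwise, this integral equality forces $P\equiv c^2$, hence $\Delta P\equiv0$ and $D^2u\equiv-\frac1N\,\mathrm{Id}$. Integrating and matching $u=0$, $|\nabla u|=c$ on $\Gamma$ then identifies $u(x)=\frac{N^2c^2-|x-p_0|^2}{2N}$ and $\O=\Sigma\cap B_{Nc}(p_0)$.

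Finally, the location of $p_0$ is read off from the Neumann condition: since $\nabla u=-\frac1N(x-p_0)$, the equation $\frac{\de u}{\de\nu}=0$ on $\Gamma_1$ combined with $\la x,\nu\ra=0$ gives $\la p_0,\nu\ra=0$ along $\Gamma_1$. If the exterior normals of $\de\Sigma$ along $\Gamma_1$ span $\RN$, this forces $p_0=O$, case (i); otherwise they are confined to a single hyperplane, meaning $\Gamma_1$ lies in a flat portion of $\de\Sigma$ through $O$ and $p_0$ belongs to that hyperplane, i.e.\ $p_0\in\de\Sigma$ with $\Gamma$ a half-sphere over the flat face, case (ii). I expect the main obstacle to be analytical rather than geometric: the solution is only assumed $W^{1,\infty}(\O)\cap W^{2,2}(\O)$, so the pointwise Bochner computation, the Hopf lemma, and the integrations by parts must all be justified up to the singular set $\de\Gamma=\de\Gamma_1$ and the vertex $O$, where $u$ need not be classically regular. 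Controlling these edge and vertex contributions—ensuring that no spurious terms arise on the non-smooth part of $\de\O$—is the delicate point, and it is precisely there that the structural facts $\la x,\nu\ra=0$ on $\Gamma_1$ and the convex sign of $\mathrm{II}$ do the heavy lifting.
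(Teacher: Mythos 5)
Your strategy is essentially the paper's second proof of Theorem \ref{unouno}, the Weinberger-style argument: the same $P$-function $v=|Du|^2+\frac{2}{N}u$, the same subharmonicity via $\|D^2u\|^2\geq(\Delta u)^2/N$, the same sign $\frac{\de v}{\de\nu}=2\left\langle D^2u\,Du,\nu\right\rangle=-2h(Du,Du)\leq 0$ on $\Gamma_1$ coming from convexity of the cone, and the same mechanism $\int_\O v\,{\rm d}x=c^2|\O|$ combined with $v\leq c^2$ to force $v\equiv c^2$, $D^2u=-\frac1N\mathbb{I}_N$, and the explicit quadratic $u$. One point where you deviate, to your advantage: you derive $\left(1+\frac{2}{N}\right)\int_\O u=c^2|\O|$ by the direct Pohozaev multiplier $\left\langle x,Du\right\rangle$, and since $\left\langle x,\nu\right\rangle=0$ on $\Gamma_1$ this route never produces the boundary term $\int_{\Gamma_1}u\left\langle D^2u\,x,\nu\right\rangle\,{\rm d}\sigma$, which the paper (using a pair of Green identities instead) must separately show vanishes by differentiating the Neumann condition along the radial field.

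Two steps are genuinely incomplete, however. First, the maximum principle: you invoke the strong maximum principle and the Hopf lemma to get $v\leq c^2$, but $\de\O$ is not smooth along $\de\Gamma$ (nor, possibly, at $O$), and $v$ is only $L^\infty\cap W^{1,2}$ there under the hypothesis $u\in W^{1,\infty}(\O)\cap W^{2,2}(\O)$ — it need not be continuous up to the corner set, so nothing prevents $\sup v$ from concentrating there, where neither the Dirichlet comparison on $\Gamma$ nor Hopf (no interior ball, no $C^1$ regularity) applies. Your remark that $\left\langle x,\nu\right\rangle=0$ and $\mathrm{II}\geq 0$ ``do the heavy lifting'' at the singular set is off-target: those facts play no role there. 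The paper replaces the pointwise argument by an integral weak maximum principle (Lemma \ref{mph}, applied through Corollary \ref{newcor}), built on a truncated divergence theorem (Lemma \ref{divsect}) that chops off $\e$-neighborhoods of $O$ and of $\de\Gamma$ and uses the $L^2$-bound on the vector field plus the coarea formula to kill the extra boundary contributions along subsequences; the same lemma is what licenses all of your integrations by parts, so it is indispensable, not a technicality you can defer. Second, the dichotomy for $p_0$: from $Du=-\frac1N(x-p_0)$ you correctly obtain $\left\langle p_0,\nu_x\right\rangle=0$ on $\Gamma_1\smallsetminus\{O\}$, and hence $p_0=O$ when these normals span $\RN$; but in the degenerate case your conclusion that $\Gamma_1$ lies in a flat portion of $\de\Sigma$, that $p_0\in\de\Sigma$, and that $\Gamma$ is a half-sphere is asserted rather than proved. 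Normals confined to the hyperplane $p_0^{\perp}$ only say that $\de\Sigma$ is invariant in the direction $p_0$ near $\Gamma_1$; together with the ray direction this yields two tangent directions, which is far from flatness of an $(N-1)$-dimensional boundary when $N\geq 4$, and it gives no reason why $p_0$ should lie on $\de\Sigma$ rather than outside $\overline{\Sigma}$. The paper settles exactly this in Lemma \ref{whoisp2}: maximizing $|x|^2$ over $\de\Gamma$ and using Lagrange multipliers places $p_0$ on the line through a farthest point, a triangle-inequality rigidity argument forces $p_0\in\de\Sigma$, and convexity (supporting hyperplanes) pushes the segments $[p_0,x]$, $x\in\de\Gamma$, into $\de\Sigma$, whence $\de\Gamma$ is a great circle over a flat face. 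Convexity of $\Sigma$ must re-enter at this final stage, and your normal-spanning dichotomy does not use it; an argument of comparable substance is required to close the proof.
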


It is well known that the `classical' overdetermined problem, i.e. when $\de\O=\Gamma$, is strictly related to the question of characterizing compact CMC surfaces without boundary. The famous Aleksandrov's theorem \cite{A}, proved by the method of moving planes, shows that the only compact constant mean curvature surfaces without boundary are spheres. A different proof of this result given in \cite{R} is essentially based on studying a related Dirichlet problem in a domain whose boundary is the given CMC surface.\\
Analogously we study CMC surfaces $\Gamma$ with boundary contained in the cone $\Sigma$. We consider smooth (N-1)-dimensional manifolds $\Gamma\subset\Sigma$ which are relatively open, bounded, connected, and orientable; we also assume that $\Gamma$ is smooth up to its non-empty boundary $\de\Gamma\subset\de\Sigma\smallsetminus\{O\}$. Under these assumptions $\Gamma$ can be considered as the relative boundary of a sector-like domain $\O$ in which we consider the mixed boundary value problem
\begin{equation}\label{mixedtype}
\begin{cases}
   -\Delta u=1 & \mbox{ in }\O, \\
   u=0 & \mbox{ on }\Gamma,\\
	\frac{\de u}{\de\nu}=0 & \mbox{ on }\Gamma_1\smallsetminus\{O\}.
\end{cases}
\end{equation}
We prove the following result
\begin{theorem}\label{unodue}
Let $\Sigma$ be a convex cone such that $\Sigma\smallsetminus\{O\}$ is smooth, and let $\Gamma$ be a surface as described above. We also assume the following conditions
\begin{itemize}
\item[$1)$] denoting by $n_x \in T_x \Gamma$ the outward unit conormal to $\de\Gamma$, and by ${\rm{d}}s$ the $(N-2)$-dimensional Hausdorff measure, then
\begin{equation}\label{segnoint}
\int_{\de\Gamma}{\left\langle x,n_x  \right\rangle \,{\rm{d}}s}\leq 0;
\end{equation}
\item[$2)$] the weak solution $u$ of problem \eqref{mixedtype} is in $W^{1,\infty}(\O) \cap W^{2,2}(\O)$.
\end{itemize}
Then, if $\Gamma$ has constant mean curvature $H>0$, we have 
$$\Gamma=\Sigma\cap \de B_{\frac{1}{H}}(p_0)\qquad\mbox{for some }p_0\in\RN.$$
Moreover, one of the following two possibilities holds:
\begin{itemize}
\item[(i)] $p_0=O$;
\item[(ii)] $p_0\in\de\Sigma$ and $\Gamma$ is a half-sphere lying over a flat portion of $\de\Sigma$.
\end{itemize}
\end{theorem}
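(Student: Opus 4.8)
The plan is to adapt the Reilly--Heintze--Karcher scheme to the sector-like domain $\O$ bounded by $\Gamma$ and $\Gamma_1$, using the torsion-type solution $u$ of \eqref{mixedtype} as auxiliary function. First I would apply Reilly's identity to $u$ on $\O$. Since $\Delta u=-1$, the interior integrand is $(\Delta u)^2-|D^2u|^2=1-|D^2u|^2$, and the pointwise Cauchy--Schwarz inequality $|D^2u|^2\ge\frac{(\Delta u)^2}{N}=\frac1N$ bounds the left-hand side from above by $\frac{N-1}{N}|\O|$. On the boundary I would split $\de\O=\Gamma\cup\Gamma_1$: on $\Gamma$ the Dirichlet condition $u=0$ kills the tangential terms and leaves the mean-curvature term $H_\Gamma\,u_\nu^2$ (with $H_\Gamma$ the sum of the principal curvatures, so $H_\Gamma=(N-1)H$), while on $\Gamma_1$ the Neumann condition $\frac{\de u}{\de\nu}=0$ annihilates every term except the second fundamental form $II_{\de\Sigma}(\nabla u,\nabla u)$. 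Here the convexity of $\Sigma$ enters decisively: it forces $II_{\de\Sigma}\ge0$, so that contribution is nonnegative and may be discarded. Using that $\Gamma$ has \emph{constant} mean curvature to pull $H_\Gamma$ out of the integral, I obtain
\[
\frac{N-1}{N}|\O|\;\ge\;H_\Gamma\int_\Gamma u_\nu^2\,dA .
\]

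Next I would combine this with two balance laws. Integrating $-\Delta u=1$ and using $u_\nu=0$ on $\Gamma_1$ gives $|\O|=\int_\Gamma(-u_\nu)\,dA$, whence Cauchy--Schwarz yields $|\O|^2\le|\Gamma|\int_\Gamma u_\nu^2\,dA$. Feeding this into the previous inequality produces a Heintze--Karcher-type bound $|\Gamma|\ge NH\,|\O|$. For the reverse inequality I would invoke the Minkowski identity $\div_\Gamma(x^T)=(N-1)-H_\Gamma\la x,\nu\ra$; integrating over $\Gamma$, which (unlike in Aleksandrov's theorem) has a boundary, produces the edge term $\int_{\de\Gamma}\la x,n_x\ra\,ds$ — exactly the quantity controlled by hypothesis \eqref{segnoint}. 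Since $\Sigma$ is a cone with vertex $O$, the position vector is tangent to $\de\Sigma$, so $\la x,\nu\ra=0$ on $\Gamma_1$ and therefore $\int_\Gamma\la x,\nu\ra\,dA=\int_{\de\O}\la x,\nu\ra\,dA=N|\O|$. Combining these facts with \eqref{segnoint} gives $(N-1)|\Gamma|\le NH_\Gamma|\O|$, i.e. $|\Gamma|\le NH|\O|$, the opposite of the Heintze--Karcher bound.

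The two inequalities now force equality everywhere, and the rigidity follows by tracing each equality back. Equality in the pointwise Cauchy--Schwarz forces $D^2u=-\frac1N\,\mathrm{Id}$, so $u=\frac{R^2-|x-p_0|^2}{2N}$ for some center $p_0$ and radius $R$; since $u=0$ on $\Gamma$, the surface lies on $\de B_R(p_0)$, and computing its mean curvature identifies $R=\frac1H$, giving $\Gamma=\Sigma\cap\de B_{1/H}(p_0)$. Equality in the Cauchy--Schwarz for $|\O|$ forces $u_\nu$ to be constant on $\Gamma$, equality in \eqref{segnoint} forces $\int_{\de\Gamma}\la x,n_x\ra\,ds=0$, and the discarded term must vanish, $\int_{\Gamma_1}II_{\de\Sigma}(\nabla u,\nabla u)\,dA=0$. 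To separate the alternatives (i) and (ii) I would examine this last condition together with the Neumann condition: if $p_0=O$ then $\nabla u=-x/N$ points in the radial (flat) ruling of the cone, so the term vanishes automatically; otherwise $\nabla u\ne0$ on $\Gamma_1$ forces $II_{\de\Sigma}=0$ there, i.e. a flat portion of $\de\Sigma$, which together with the matching of the cap and the boundary condition puts $p_0\in\de\Sigma$ and makes $\Gamma$ a half-sphere.

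I expect the main obstacle to be analytic rather than geometric: justifying Reilly's identity and the integrations by parts on the non-smooth domain $\O$, whose boundary carries the edge $\de\Gamma=\de\Gamma_1$ where the Dirichlet and Neumann parts meet (in general not orthogonally), as well as the possible vertex $O$. This is precisely why the hypotheses demand $u\in W^{1,\infty}(\O)\cap W^{2,2}(\O)$: one must approximate $\O$ by smooth subdomains, control the resulting edge and vertex contributions, and show they survive the limit only through terms with a definite sign. Verifying that no uncontrolled boundary term persists along $\de\Gamma$ — and that the sole edge contribution is the Minkowski term governed by \eqref{segnoint} — is the delicate point on which the whole argument rests.
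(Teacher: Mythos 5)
Your main argument is, in substance, the paper's own proof written in the language of Reilly's formula: the paper never invokes Reilly's identity by name but works instead with $2\sigma_2(D^2u)=\div\left(\mathcal{J}\sigma_2(D^2u)\,Du\right)$ and Newton's inequality \eqref{sigmeq}, which after inserting the boundary conditions yields exactly your two boundary terms --- $N\int_\Gamma H|Du|^2\,{\rm d}\sigma$ on $\Gamma$, and on $\Gamma_1$ the term $-\la D^2u\,Du,\nu\ra=h(Du,Du)\geq 0$, obtained by differentiating the Neumann condition tangentially (this is \eqref{convecco}, where convexity enters, precisely your $II_{\de\Sigma}(\nabla u,\nabla u)$ term). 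Your Cauchy--Schwarz step, the Minkowski identity $\Delta_\Gamma\bigl(\tfrac12|x|^2\bigr)=(N-1)(1-H\la x,\nu\ra)$ with the edge term controlled by \eqref{segnoint}, the use of $\la x,\nu\ra=0$ on $\Gamma_1$, the resulting equality chain, and the rigidity conclusion $D^2u=-\tfrac1N\mathbb{I}_N$ all coincide with the paper's. The analytic worry you flag at the end is resolved in the paper by Lemma \ref{divsect}, a divergence theorem valid for vector fields in $C^1(\O\cup\Gamma\cup\Gamma_1\smallsetminus\{O\})\cap L^2(\O)$ with divergence in $L^1(\O)$: hypothesis $2)$ makes it applicable to $\mathcal{J}\sigma_2(D^2u)\,Du$, and the fluxes through the collars around $\de\Gamma$ and the vertex vanish along subsequences by an $L^2$ averaging argument, so no uncontrolled edge term survives --- your expectation on this point is correct.

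The genuine gap is your treatment of the dichotomy (i)/(ii). First, a logical slip: the second fundamental form of $\de\Sigma$ is only positive \emph{semi}definite, and on a cone the radial direction always lies in its kernel; so the vanishing of $\int_{\Gamma_1}II_{\de\Sigma}(\nabla u,\nabla u)\,{\rm d}\sigma$ only gives $\nabla u=-(x-p_0)/N\in\ker II_x$ pointwise, hence (since $x\in\ker II_x$ automatically) $p_0\in\ker II_x$ --- not ``$II_{\de\Sigma}=0$ there,'' and certainly not yet a flat portion of $\de\Sigma$ containing the disc bounded by $\de\Gamma$. Second, even granting some flatness in one direction, the assertions ``$p_0\in\de\Sigma$'' and ``$\Gamma$ is a half-sphere'' are exactly what must be proved, and your sketch asserts them rather than deriving them. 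The paper instead notes that the Neumann condition for the now-explicit quadratic $u$ forces $\Gamma=\de B_{1/H}(p_0)\cap\Sigma$ to meet $\de\Sigma$ orthogonally along $\de\Gamma$, and then applies the purely geometric Lemma \ref{whoisp2} (after \cite{RitRos}): maximizing $|x|^2$ over $\de\Gamma$ and using Lagrange multipliers places $p_0$ on the line through $O$ and the maximizer; the triangle-inequality argument plus convexity rules out $p_0=t\bar y$ with $t<0$; convexity again shows each segment $[p_0,x]$, $x\in\de\Gamma$, lies in $\de\Sigma$, whence $\de\Gamma\subset p_0+T_{p_0}\de\Sigma$ is a great circle and $\Gamma$ a half-sphere over a flat disc. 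To complete your proof you would need to reproduce this lemma (or genuinely close the $II$-based route, which requires locating $p_0$ by an independent argument); as written, the classification step does not stand.
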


Let us remark that both hypotheses $1)$ and $2)$ of Theorem \ref{unodue} are kind of ``gluing conditions'' between the cone and the surface $\Gamma$. For the first one this is evident. For the second one we observe that for the mixed boundary value problem \eqref{mixedtype} the regularity of the solution up to the boundary strongly depends on the way $\Gamma$ and $\Gamma_1$ intersect (see \cite{Gri, Dau}). Indeed a weak solution of \eqref{mixedtype} (in the Sobolev space $\textsc{V}(\O)=\{u\in \textsc{H}^1(\O)\,:\,u\equiv0\,\,\mbox{ on }\Gamma\}$) is always of class $C^{\infty}(\O)$ and has classical derivatives on $\Gamma\cup\Gamma_1\smallsetminus\{O\}$, but the question of regularity on the whole $\de\O$ is a delicate issue and is related to the angles formed at the intersection between $\Gamma$ and $\Gamma_1$. The same remark applies to Theorem \ref{unouno} where we require the solution to be in $W^{1,\infty}(\O) \cap W^{2,2}(\O)$. In the case of orthogonality between $\Gamma$ and $\Gamma_1$ we prove, in Section \ref{secreg} (Proposition \ref{preg}), that a solution of \eqref{mixedtype} is $C^2\left(\overline{\O}\smallsetminus\{O\}\right)$ while the regularity at the vertex is ensured by the results in \cite{AJ, Maz}. Thus, the hypothesis that $u\in W^{1,\infty}(\O) \cap W^{2,2}(\O)$ in Theorem \ref{unouno} and in Theorem \ref{unodue} holds. Hence in this case Theorem \ref{unouno} and Theorem \ref{unodue} hold without other conditions, and we restate them in Section \ref{secreg} (Theorem \ref{unounoprimo} and Theorem \ref{unodueprimo}). Let us point out that in the orthogonal case an alternative proof of Theorem \ref{unodue} completely independent of the associated PDE problem can be provided adapting the one given by Montiel and Ros in \cite{MontRos} for closed surfaces (see Theorem \ref{unodueprimo}). We also mention that for surfaces intersecting orthogonally a convex cone the result of Theorem \ref{unodue} has already been given in \cite{CP}. Their proof is similar to ours, using the approach of Reilly, but the authors do not say anything about the regularity needed to carry on the procedure. They also neglect to consider the case $ii)$ of Theorem \ref{unodue} which can, actually, occur. On the other side, they consider also the case of higher order curvatures.\\
In Section \ref{Weinb} we make a comment about the validity of Theorem \ref{unouno} in general cones by assuming a kind of integral overdetermined condition on $\Gamma_1$ (see Proposition \ref{ultraover}).\\
Concerning CMC surfaces, in the next theorem we will show another characterization of them where we assume that the surface $\Gamma$ is starshaped (or, equivalently, a radial graph) but the cone can be arbitrary and nothing is required about solutions of the mixed boundary problem in the related sector-like domain. Moreover we do not need to assume the mean curvature $H$ to be positive.

\begin{theorem}\label{unotre}
Let $\Sigma$ be any cone in $\RN$ such that $\Sigma\smallsetminus\{O\}$ is smooth, and suppose that $\Gamma\subset \Sigma$ is a smooth $(N-1)$-dimensional manifold which is relatively open, bounded, orientable, connected and with smooth boundary contained in $\de\Sigma$. Assume that the mean curvature of $\Gamma$ is a constant $H\in \R \smallsetminus\{0\}$, and that
\begin{equation}\label{segnointwe}
\int_{\de\Gamma}{H\left\langle x,n_x  \right\rangle \,{\rm{d}}s} - \int_{\de\Gamma}{\left\langle \nabla_n\nu,x  \right\rangle \,{\rm{d}}s}\leq 0,
\end{equation}
where $n_x$ is as in Theorem \ref{unodue} and $\nabla$ denotes the usual Levi-Civita connection in $\RN$. If $\Gamma$ is strictly starshaped with respect to $O$, i.e.
$$\left\langle x,\nu_x \right\rangle > 0 \quad\mbox{ for every }x\in \Gamma, $$
then we have $$\Gamma = \de B_{\frac{1}{|H|}}(p_0)\cap\Sigma\qquad\mbox{for some }p_0\in\RN.$$
\end{theorem}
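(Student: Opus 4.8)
The plan is to bypass the mixed boundary value problem altogether and argue by two Minkowski-type integral identities on $\Gamma$, combined with Newton's inequality and the strict starshapedness. Throughout I fix the orientation $\nu$ (the one with $\la x,\nu_x\ra>0$), let $S$ denote the associated shape operator with $\sigma_1=\tr S=(N-1)H$ and $\sigma_2$ the second elementary symmetric function of the principal curvatures, and write $x^T=x-\la x,\nu\ra\nu$ for the tangential part of the position field. First I would establish the first Minkowski formula with boundary: since $\div_\Gamma x=N-1$ and $\div_\Gamma(\la x,\nu\ra\nu)=\la x,\nu\ra\,\tr S$, one gets $\div_\Gamma(x^T)=(N-1)-\sigma_1\la x,\nu\ra$, and integrating over $\Gamma$ while using that the conormal $n$ is tangent to $\Gamma$ (so $\la x^T,n\ra=\la x,n\ra$) yields
\begin{equation*}
\int_{\de\Gamma}\la x,n\ra\,{\rm d}s=(N-1)|\Gamma|-\sigma_1\int_\Gamma\la x,\nu\ra\,dA.
\end{equation*}

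The second identity is where the constancy of $H$ enters. Applying $\div_\Gamma$ to the tangential field $Z=S(x^T)$, and using the Codazzi equations together with $\nabla^\Gamma_{e_j}x^T=e_j-\la x,\nu\ra S(e_j)$, one finds
\begin{equation*}
\div_\Gamma Z=\la\nabla^\Gamma\sigma_1,x^T\ra+\sigma_1-\la x,\nu\ra(\sigma_1^2-2\sigma_2),
\end{equation*}
in which the first term drops because $\sigma_1=(N-1)H$ is constant. Since $S$ is self-adjoint and $S(n)$ is tangential, $\la Z,n\ra=\la x^T,S(n)\ra=\la\nabla_n\nu,x\ra$, so integration gives
\begin{equation*}
\int_{\de\Gamma}\la\nabla_n\nu,x\ra\,{\rm d}s=\sigma_1|\Gamma|-\int_\Gamma\la x,\nu\ra(\sigma_1^2-2\sigma_2)\,dA.
\end{equation*}

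Then I would eliminate $|\Gamma|$: multiplying the first identity by $H=\sigma_1/(N-1)$, subtracting the second, and using $H\sigma_1=\sigma_1^2/(N-1)$ produces
\begin{equation*}
\int_\Gamma\la x,\nu\ra\left(\frac{N-2}{N-1}\sigma_1^2-2\sigma_2\right)dA=\int_{\de\Gamma}H\la x,n\ra\,{\rm d}s-\int_{\de\Gamma}\la\nabla_n\nu,x\ra\,{\rm d}s.
\end{equation*}
By hypothesis \eqref{segnointwe} the right-hand side is $\leq 0$. On the other hand Newton's inequality $\frac{N-2}{N-1}\sigma_1^2\geq 2\sigma_2$, with equality exactly at umbilic points, together with strict starshapedness $\la x,\nu\ra>0$, makes the integrand on the left nonnegative. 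Hence both sides vanish; and since $\la x,\nu\ra>0$ everywhere, the Newton term must vanish identically, so $\Gamma$ is totally umbilic. A connected totally umbilic hypersurface with $H\neq 0$ lies on a sphere of radius $1/|H|$, which gives $\Gamma=\de B_{1/|H|}(p_0)\cap\Sigma$.

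The main obstacle I expect is the bookkeeping in the second identity: carrying out the contraction $\div_\Gamma(S(x^T))$ correctly via Codazzi and checking that the resulting boundary integrand is precisely $\la\nabla_n\nu,x\ra$ with the sign that matches \eqref{segnointwe}, so that the hypothesis enters with the right orientation. The conceptual point—why starshapedness is indispensable—surfaces only at the last step: it is the strict positivity of $\la x,\nu\ra$ that upgrades the vanishing of a single weighted integral to pointwise umbilicity, and this is exactly what replaces the convexity of the cone used in Theorem \ref{unodue}.
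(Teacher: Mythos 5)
Your proposal is correct and is essentially the paper's own Jellett-type argument in different packaging: your two divergence identities are exactly the integrated tangential Laplacians of $\frac{1}{2}|x|^2$ and $\lambda=\langle x,\nu\rangle$ (note $D_\Gamma\lambda=S(x^T)$), so your $H$-weighted combination reproduces the paper's identity $\int_\Gamma \Delta_\Gamma\bigl(\tfrac{H}{2}|x|^2-\lambda\bigr)\,{\rm d}\sigma=\int_{\de\Gamma}\bigl(H\langle x,n\rangle-\langle \nabla_n\nu,x\rangle\bigr)\,{\rm d}s$, with your integrand $\frac{N-2}{N-1}\sigma_1^2-2\sigma_2$ equal to the paper's $\|h\|^2-(N-1)H^2$ via $\|h\|^2=\sigma_1^2-2\sigma_2$. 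The equality-case analysis through strict starshapedness and the final umbilicity-implies-sphere step likewise coincide with the paper, which merely makes the last step explicit by showing the functions $v_k=Hx_k-\nu_k$ are constant.
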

As for the condition \eqref{segnoint} of Theorem \ref{unodue}, we have that when $\Gamma$ and $\de\Sigma$ intersect orthogonally the assumption \eqref{segnointwe} is automatically satisfied since all integrals involved vanish. We prove this in Section \ref{secreg}, where we restate Theorem \ref{unotre} without any gluing condition (Theorem \ref{unotreprimo}). Let us point out that the characterization of CMC surfaces in Theorem \ref{unotre} is new also in the case of orthogonality between $\Gamma$ and $\de\Sigma$, indeed the results of \cite{CP} requires the cone to be convex.

Let us comment on our results. The overdetermined problem \eqref{serrintype} is a variant of the classical problem considered by J. Serrin in his famous paper \cite{Ser} where more general differential equations are considered. Since then, overdetermined problems have attracted the attention of many mathematicians, and plenty of results in bounded or unbounded domains and for different kinds of differential operators have been obtained. The related bibliography is very large, so we quote in this paper only the results strictly related to ours.\\
In our case the problem is partially overdetermined; in fact we impose both Dirichlet and Neumann conditions only on a part of the boundary, namely $\Gamma$, while a sole homogeneous Neumann boundary condition is assigned on $\Gamma_1$.\\
The results of Theorem \ref{unouno} and Theorem \ref{unodue} are strictly related to a relative isoperimetric inequality in cones obtained in \cite{LiPa} which indeed inspired the research of this paper. This relative isoperimetric inequality, that we recall below, was studied in connection to the symmetrization of mixed boundary condition elliptic problems and to Sobolev inequalities (and their best constants) for functions not vanishing on the whole boundary (see \cite{LiPaTr, PaTr, GP}). In the paper \cite{LiPa} the authors consider measurable sets $E\subset\Sigma$ and their De Giorgi-perimeter relative to $\Sigma$, $P_\Sigma(E)$, i.e. the `measure' of the part of $\de E$ contained in $\Sigma$ and prove the following (see \cite{LiPa} for the definitions)
\begin{theoLP}
If $\Sigma$ is a convex cone in $\RN$, $N\geq 2$, then the following isoperimetric inequality holds:
\begin{equation}\label{iso}
P_\Sigma(E)\geq N\alpha_N^{\frac{1}{N}}|E|^{\frac{N-1}{N}}
\end{equation}
for any measurable set $E\subset\Sigma$ with Lebesgue measure $|E|<+\infty$, where $\alpha_N$ is the measure of the unit sector $\Sigma_1=\Sigma\cap B_1(0)$ homothetic to $\Sigma$. Moreover, if $\Sigma\smallsetminus\{O\}$ is smooth, equality in \eqref{iso} holds if and only if $E$ is a convex sector $\Sigma_R=\Sigma\cap B_R(0)$ of radius $R\geq0$ homothetic to $\Sigma$.
\end{theoLP}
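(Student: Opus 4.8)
The plan is to deduce Theorem A from the Brunn--Minkowski inequality, exploiting the convexity of $\Sigma$ to relate the \emph{relative} perimeter $P_\Sigma(E)$ to the first variation of the volume of the Minkowski enlargements of $E$ by the unit sector $\Sigma_1$. First I would record the two elementary facts that make $\Sigma_1$ the correct ``model'' body: $|\Sigma_1|=\alpha_N$ by definition, and the homothetic sectors $\Sigma_R=R\,\Sigma_1$ already saturate the inequality, since $|\Sigma_R|=\alpha_N R^N$ while $P_\Sigma(\Sigma_R)=\textsc{H}_{N-1}(\Sigma\cap\de B_R)=N\alpha_N R^{N-1}$, so that $P_\Sigma(\Sigma_R)=N\alpha_N^{1/N}|\Sigma_R|^{(N-1)/N}$. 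Thus it suffices to prove the inequality, and the guess for the equality case is precisely a sector.

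The geometric heart of the argument is the following observation, where convexity enters. For $t>0$ consider the Minkowski sum $E+t\Sigma_1$. Since $\Sigma$ is a convex cone we have $\Sigma+\Sigma\subseteq\Sigma$ and $t\Sigma_1\subseteq\Sigma$, whence $E+t\Sigma_1\subseteq\Sigma$: the enlargement never leaves the cone. Next I would compute the first variation of $t\mapsto|E+t\Sigma_1|$ in terms of the support function $h_{\Sigma_1}(\nu)=\sup_{y\in\Sigma_1}\la y,\nu\ra$. On the relative boundary $\Gamma=\de E\cap\Sigma$ one has $h_{\Sigma_1}(\nu)\le\sup_{y\in\Sigma_1}|y|=1$ because $\Sigma_1\subseteq\overline{B_1(0)}$. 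On the remaining part $\de E\cap\de\Sigma$, the outer normal $\nu_x$ to $E$ agrees with the outer normal to the convex cone $\Sigma$, so $\la y,\nu_x\ra\le\la x,\nu_x\ra=0$ for every $y\in\Sigma\supseteq\Sigma_1$ (here $\la x,\nu_x\ra=0$ is the defining property of the boundary of a cone), while $O\in\overline{\Sigma_1}$ forces $h_{\Sigma_1}(\nu_x)=0$ there. Consequently the part of $\de E$ lying on $\de\Sigma$ contributes nothing to the growth of the volume, and
\begin{equation*}
\lim_{t\to0^+}\frac{|E+t\Sigma_1|-|E|}{t}=\int_{\de^*E}h_{\Sigma_1}(\nu)\,d\textsc{H}_{N-1}=\int_{\Gamma}h_{\Sigma_1}(\nu)\,d\textsc{H}_{N-1}\le P_\Sigma(E).
\end{equation*}

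On the other hand, the Brunn--Minkowski inequality gives $|E+t\Sigma_1|^{1/N}\ge|E|^{1/N}+t|\Sigma_1|^{1/N}=|E|^{1/N}+t\,\alpha_N^{1/N}$, and expanding the $N$-th power shows
\begin{equation*}
\lim_{t\to0^+}\frac{|E+t\Sigma_1|-|E|}{t}\ge N\alpha_N^{1/N}|E|^{(N-1)/N}.
\end{equation*}
Chaining the last two displays yields $P_\Sigma(E)\ge N\alpha_N^{1/N}|E|^{(N-1)/N}$, which is \eqref{iso}. For the equality statement I would invoke the rigidity of the Brunn--Minkowski inequality: if equality holds then $E$ is homothetic to the convex body $\Sigma_1$, i.e.\ $E=r\Sigma_1+v$ for some $r>0$ and $v\in\RN$; the constraint $E\subseteq\Sigma$ together with the smoothness of $\de\Sigma\smallsetminus\{O\}$ near the vertex forces $v=0$, so that $E=\Sigma_r=\Sigma\cap B_r(0)$.

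The step I expect to be the main obstacle is making the first-variation identity rigorous for an arbitrary set of finite perimeter rather than a smooth one: one must justify that the anisotropic outer Minkowski content of $E$ relative to $\Sigma_1$ equals $\int_{\de^*E}h_{\Sigma_1}(\nu)\,d\textsc{H}_{N-1}$ and that only the reduced boundary inside $\Sigma$ contributes, which is exactly what reduces the full perimeter to the relative one. This is precisely where the convexity of the cone is indispensable, through the identity $h_{\Sigma_1}\equiv0$ on directions normal to $\de\Sigma$. In the equality case the delicate point is excluding translated homothets $r\Sigma_1+v$ with $v\neq0$, which is handled using that $\Sigma\smallsetminus\{O\}$ is smooth: a nonzero translation would either push the vertex region of $r\Sigma_1$ outside $\Sigma$ or create a corner on $\de E\cap\de\Sigma$ incompatible with the smoothness of $\de\Sigma$ away from $O$.
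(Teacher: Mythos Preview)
The paper does not contain a proof of Theorem~A: it is quoted verbatim from \cite{LiPa} (Lions--Pacella) and only used as background motivation for the rigidity results proved later. There is therefore no ``paper's own proof'' to compare your attempt against.

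That said, your outline is essentially the strategy of the original Lions--Pacella argument: reduce the relative isoperimetric inequality to the Brunn--Minkowski inequality by enlarging $E$ with the model body $\Sigma_1$ and observing that convexity of the cone kills the contribution of $\de E\cap\de\Sigma$ to the volume growth. The points you flag as delicate are exactly the ones that require work. First, the identification of the one-sided derivative $\lim_{t\to 0^+}t^{-1}(|E+t\Sigma_1|-|E|)$ with the anisotropic perimeter $\int_{\de^*E}h_{\Sigma_1}(\nu)\,d\textsc{H}_{N-1}$ is not automatic for general sets of finite perimeter; in \cite{LiPa} this is handled essentially by an approximation, and your write-up should either do the same or invoke a known anisotropic outer-Minkowski-content theorem. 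Second, in the equality discussion your appeal to Brunn--Minkowski rigidity only gives that $E$ coincides (up to null sets) with a translate $r\Sigma_1+v$; to conclude $v=0$ you must also use the second inequality in your chain, namely $\int_\Gamma h_{\Sigma_1}(\nu)\,d\textsc{H}_{N-1}\le P_\Sigma(E)$, since a translated sector with $v$ in the interior of $\Sigma$ still sits inside $\Sigma$ but has its flat faces contributing to $P_\Sigma$, where $h_{\Sigma_1}(\nu)=0<1$. Your final sentence gestures at this but does not quite say it.
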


Hence, relatively to convex cones $\Sigma$, the spherical sector $\Sigma_R$ homothetic to $\Sigma$ plays in the isoperimetric problem the same role as the ball in the whole $\RN$. By this we mean that $\Sigma_R$ are the only sets which minimize the relative perimeter under a volume constraint, as the balls do by taking the whole perimeter. Let us point out that quantitative versions of \eqref{iso}, even with more general densities, have been proved in \cite{FI} (see also \cite{CRS}), while in \cite{BF} Baer and Figalli have shown that also almost-convex cones could be considered. Note that in \cite{LiPa} it is used and pointed out that the sets $F$ minimizers for \eqref{iso} have the property that their relative boundary $\de_\Sigma F$ intersect $\de\Sigma$ orthogonally. This and the fact that the minimizers have constant mean curvature enlighten also the connection between \eqref{iso} and Theorem \ref{unodue}.\\
Since the balls in $\RN$ are the only bounded connected sets $\O$ for which the overdetermined problem
\begin{equation}\label{serrinreal}
\begin{cases}
   -\Delta u=1 & \mbox{ in }\O, \\
   u=0 & \mbox{ on }\de\O,\\
	 \frac{\de u}{\de\nu}=-c<0 & \mbox{ on }\de\O
\end{cases}
\end{equation}
has a solution (Serrin's theorem \cite{Ser}), while spheres are the only compact constant mean curvature surfaces without boundary (Aleksandrov's theorem \cite{A}), it is quite natural to ask whether the spherical sectors and the spherical caps share the same property relatively to cones. These are indeed the contents of Theorem \ref{unouno}, Theorem \ref{unodue}, and Theorem \ref{unotre}.\\
We recall that, at the same time when the paper of Serrin was published, H.F. Weinberger \cite{wei} proved the same rigidity result for \eqref{serrinreal} with an easier proof based on integral identities (Serrin's paper concerns more general elliptic equations). Moreover, quite recently an alternative proof of the same result has been provided in \cite{BNST} also based on integral identities as well as on symmetric functions of the eigenvalues of the Hessian matrix. To get Theorem \ref{unouno} we give two proofs, following respectively the approach in \cite{BNST} and in \cite{wei}. We believe that it is interesting to see how the convexity of the cone comes into play in both of them. Then, following the ideas of Reilly \cite{R} (as in \cite{CP}) which are also based on integral identities, we prove Theorem \ref{unodue}.\\ 
Finally, let us comment on Theorem \ref{unotre}. On one side it is restricted to starshaped CMC surfaces $\Gamma$ (i.e. radial graphs), on the other side it does not require any convexity assumption on the cone $\Sigma$, neither on the regularity of solutions of the mixed boundary problem which does not play any role in the proof. This indicates that the convexity assumption on the cone can be removed by paying the price of considering only starshaped surfaces. A natural question is then the following: 

\begin{center}\emph{can one prove the same rigidity result of Theorem \ref{unouno} without assuming that the cone is convex but requiring instead that the domain $\O$ is starshaped with respect to the origin?}
\end{center}

\noindent We conjecture that the answer should be affirmative, even though both proofs of Theorem \ref{unouno} strongly rely on the convexity of the cone. To this aim, we also refer the reader to Proposition \ref{ultraover}.\\ 
The proof of Theorem \ref{unotre} follows an old proof of J.H. Jellett \cite{jel} for compact CMC starshaped surfaces without boundary. It has been recently used in \cite{MT} to prove a rigidity result for the Levi curvature in a context where the classical proof of A.D. Aleksandrov \cite{A} by moving planes and the proof of R.C. Reilly \cite{R} by integral identities seem not to work.\\
Let us finally point out that Theorem \ref{unouno} suggests that a parallel symmetry result should hold for positive solutions of nonlinear mixed boundary problems in spherical sectors in the same way as the famous Gidas-Ni-Nirenberg \cite{GNN} theorem in the ball was inspired by Serrin's result. In other words, we mean that all positive solutions of a certain class of nonlinear problems in spherical sectors should be radial. The difficulty in getting such symmetry is that the standard moving-plane method cannot be straightforwardly applied in cones. An attempt in this direction has been done in \cite{BePa} by a quite sophisticated modification of the moving plane method obtaining a complete result only in dimension two.

The paper is organized as follows. In Section \ref{prem} we state and/or prove some preliminary results. In Section \ref{Weinb} we prove Theorem \ref{unouno}, while Section \ref{Reilly} is devoted to the proof of Theorem \ref{unodue}. In Section \ref{Jellett} we consider starshaped surfaces and prove Theorem \ref{unotre}. Finally, in Section \ref{secreg} we study the case when the surface $\Gamma$ and the cone $\Sigma$ intersect orthogonally.

\section{Preliminaries}\label{prem}

\vskip 0.2cm

Let us first recall standard definitions.\\ 
In the sequel we are going to need the notions of Laplace-Beltrami operator, second fundamental form, and mean curvature for a smooth hypersurface. We recall them here. Fix $N\geq 2$. We indicate with $\la\cdot,\cdot\ra$ the usual inner product in $\RN$, and with $\nabla$ the standard Levi-Civita connection. Let $M$ be a $(N-1)$-dimensional smooth orientable submanifold of $\RN$, considered with the induced metric. For $x\in M$, we denote by $\nu=\nu_x$ a choice for the unit normal. Every time that $M$ is part of the boundary of a bounded set, we will always choose $\nu$ as the outward normal. If $\{e_1,\ldots,e_{N-1}\}$ is a orthonormal frame of the tangent space to $M$, we can define the gradient and the Laplace-Beltrami operator on $M$, when applied to smooth functions $f$, respectively as
$$D_{M} f=\sum_{j=1}^{N-1}{e_j(f)e_j},\qquad \Delta_{M} f=\sum_{j=1}^{N-1}{e^2_j(f)-\left(\nabla^{M}_{e_j}e_j\right)f},$$
where $\nabla^{M}$ stands for the Levi-Civita connection induced on $M$ (that is $\nabla^{M}_U V=\nabla_U V- \left\langle\nabla_U V,\nu \right\rangle\nu$).\\
On the other hand, the second fundamental form $h$ of $M$ is the bilinear symmetric form defined on $TM\times TM$ as
\begin{equation}\label{II}
h(e_i,e_j)=\left\langle \nabla_{e_i}\nu,e_j\right\rangle,\qquad \mbox{per }i,j\in \{1,\ldots,N-1\}.
\end{equation}
The mean curvature of $M$ is then defined as
$$H=\frac{1}{N-1}\text{tr}(h).$$
If $M$ is locally $\{u(x)=s\}\subseteq\de\{u>s\}$ for a smooth function $u$, and $|Du|\neq 0$ on $M$, then $\nu=-\frac{Du}{|Du|}$ and we can write
\begin{equation}\label{meanlevel}
(N-1)H=-\frac{\Delta u}{|Du|}+\frac{\left\langle D^2u Du, Du\right\rangle}{|Du|^3}.
\end{equation}
Considering a convex cone $\Sigma$ which is smooth outside the vertex $O$ and a related sector-like domain as defined in the Introduction, we have that the second fundamental form of $\de\Sigma$ at the points of $\Gamma_1\smallsetminus\{O\}$ is nonnegative definite, i.e.
\begin{equation}\label{defconv}
h(\cdot,\cdot)\geq 0\quad \mbox{in}\,\,\, \Gamma_1\smallsetminus\{O\}.
\end{equation}
We now recall the matrix inequalities which are crucial for the proofs of our results. In the literature such inequalities are well-known and they have been successfully exploited to get rigidity results.\\
For any $n\times n$ symmetric matrix $A=(a_{ij})$ we denote with $\|A\|^2$ the sum of the squares of the elements. Moreover, we denote by $\sigma_2(A)$ the second elementary symmetric functions of the eigenvalues of $A$. In other words,
$$\sigma_2(A)=\sum_{1\leq i<j\leq n}{\deter\left( \begin{array}{cc}
a_{ii} & a_{ij} \\
a_{ji} & a_{jj} \end{array} \right)}=\frac{1}{2}\left((\tr(A))^2-\|A\|^2\right).$$
For any $A$, we have the following matrix inequality: 
\begin{equation}\label{inequalitymatrixnorm}
\|A\|^2\geq \frac{1}{n}(\text{tr}(A))^2,
\end{equation}
and
\begin{equation}\label{ugu}
\mbox{equality holds in \eqref{inequalitymatrixnorm} if and only if $A$ is a multiple of the identity matrix }\,\mathbb{I}_n.
\end{equation}
Rewriting \eqref{inequalitymatrixnorm} in terms of $\sdu$, we get one of the Newton inequalities
\begin{equation}\label{sigmeq}
\sdu(A)\leq \frac{n-1}{2n}(\tr(A))^2,\qquad\mbox{ with equality iff }A\mbox{ is a multiple of }\mathbb{I}_n.
\end{equation}

We are now going to show some general lemmas used in the following sections. They mainly concern the validity of maximum-type principles in sector-like domains and the issues around uniqueness of spherical sectors.
\begin{lemma}\label{divsect}
Fix a sector-like domain $\O$. Let $F:\O\longrightarrow\RN$ be a vector field such that
$$F\in C^1(\O\cup\Gamma\cup\Gamma_1\smallsetminus\{O\})\cap {\rm L}^2(\O)\quad\mbox{ and }\quad\div{(F)}\in{\rm L}^1(\O).$$
Then
$$\int_\O{\div{(F)}(x)\,{\rm d}x}=\int_{\Gamma}{\left\langle F,\nu\right\rangle\,{\rm d}\sigma}+ \int_{\Gamma_1\smallsetminus\{O\}}{\left\langle F,\nu\right\rangle\,{\rm d}\sigma}.$$
\end{lemma}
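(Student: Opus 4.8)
The plan is to exhaust $\O$ by excising a small ball around the singular vertex $O$, apply the classical Gauss--Green theorem on the resulting regular domain, and then pass to the limit. For $\e>0$ small set $\O_\e=\O\cap\{|x|>\e\}$, whose boundary splits into the truncated faces $\Gamma_\e=\Gamma\cap\{|x|>\e\}$, $\Gamma_{1,\e}=\Gamma_1\cap\{|x|>\e\}$, and the spherical cap $S_\e=\O\cap\de B_\e(O)$. Under the sector-like assumptions the faces $\Gamma$ and $\Gamma_1$ are smooth up to their common smooth edge $\de\Gamma$ and meet $\de B_\e(O)$ transversally, so $\O_\e$ is a bounded Lipschitz domain and $F\in C^1(\overline{\O_\e})$ (its closure being bounded away from $O$). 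Hence the divergence theorem on $\O_\e$ gives
$$\int_{\O_\e}{\div(F)\,dx}=\int_{\Gamma_\e}{\la F,\nu\ra\,d\sigma}+\int_{\Gamma_{1,\e}}{\la F,\nu\ra\,d\sigma}+\int_{S_\e}{\la F,\nu\ra\,d\sigma},$$
where on $S_\e$ the outer normal is $\nu=-x/|x|$.

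Letting $\e\to0$, the left-hand side converges to $\int_\O{\div(F)\,dx}$ by dominated convergence since $\div(F)\in{\rm L}^1(\O)$, while $\Gamma_\e\nearrow\Gamma$ and $\Gamma_{1,\e}\nearrow\Gamma_1\smallsetminus\{O\}$ force the first two boundary terms to converge to the corresponding integrals in the statement. Thus the whole identity follows once one shows that the spherical-cap contribution tends to zero. This is the only delicate point, and I expect it to be the \emph{main obstacle}: $F$ is merely ${\rm L}^2$ near $O$ and the cap $S_\e$ collapses onto the vertex, so one cannot hope for a pointwise bound on $F$ there.

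To overcome it I would argue along a well-chosen sequence. Set $g(\e)=\int_{\O\cap\de B_\e(O)}{|F|^2\,d\textsc{H}_{N-1}}$; by the coarea formula $\int_0^{r_0}{g(\e)\,d\e}=\int_{\O\cap B_{r_0}(O)}{|F|^2\,dx}<+\infty$, so $g\in{\rm L}^1(0,r_0)$ and therefore $\liminf_{\e\to0^+}\e\,g(\e)=0$ (otherwise $g(\e)\geq\delta/\e$ near $0$ would contradict integrability). Pick $\e_k\downarrow 0$ with $\e_k\,g(\e_k)\to0$. Since $\textsc{H}_{N-1}(S_\e)\leq\textsc{H}_{N-1}(\Sigma\cap\de B_\e(O))=c\,\e^{N-1}$ with $c=\textsc{H}_{N-1}(\omega)$, the Cauchy--Schwarz inequality yields
$$\left|\int_{S_\e}{\la F,\nu\ra\,d\sigma}\right|\leq\left(\int_{S_\e}{|F|^2\,d\sigma}\right)^{\frac12}\textsc{H}_{N-1}(S_\e)^{\frac12}\leq\sqrt{c}\,\bigl(g(\e)\,\e^{N-1}\bigr)^{\frac12}.$$
For $N\geq2$ and $\e_k\leq1$ one has $\e_k^{N-1}\leq\e_k$, whence $g(\e_k)\,\e_k^{N-1}\leq g(\e_k)\,\e_k\to0$, so the cap integral vanishes along $\e_k$. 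Passing to the limit over $\e_k$ in the Gauss--Green identity above then yields the asserted formula.
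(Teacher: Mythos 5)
Your treatment of the vertex is fine: excising $B_\e(O)$, estimating the spherical cap via coarea plus Cauchy--Schwarz, and selecting a good sequence $\e_k$ is essentially what the paper does for that singularity. But there is a genuine gap at the \emph{edge} $\de\Gamma=\de\Gamma_1$, where $\overline{\Gamma}$ and $\overline{\Gamma}_1$ meet. The hypothesis is only $F\in C^1(\O\cup\Gamma\cup\Gamma_1\smallsetminus\{O\})$, and the set $\O\cup\Gamma\cup\Gamma_1\smallsetminus\{O\}$ does \emph{not} contain $\de\Gamma$: indeed $\Gamma$ is the part of $\de\O$ inside the open cone $\Sigma$, while $\Gamma_1=\de\O\smallsetminus\overline{\Gamma}$, so the $(N-2)$-dimensional edge belongs to neither. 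Hence your claim that $F\in C^1(\overline{\O_\e})$ because ``its closure is bounded away from $O$'' is false: $\overline{\O_\e}$ still contains the portion of $\de\Gamma$ with $|x|>\e$, where $F$ need not extend continuously, nor even be bounded. This is not a technicality: the lemma is stated with these weak hypotheses precisely because in the applications $F$ is built from $Du$ and $D^2u$ of solutions of the mixed problem \eqref{mixedtype}, whose derivatives may blow up along the Dirichlet--Neumann interface $\de\Gamma$. Consequently your opening application of Gauss--Green on $\O_\e$ is unjustified, and the subsequent limit $\Gamma_\e\nearrow\Gamma$ also silently presumes that $\la F,\nu\ra$ is integrable on $\Gamma$ up to the edge, which does not follow from the hypotheses alone.

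The repair is the paper's second excision: besides the ball around $O$, remove a tubular neighborhood $\{x\,:\,d(x,\de\Gamma)<\e\}$ of the edge, apply the divergence theorem on the doubly truncated domain (where $F$ genuinely is $C^1$ up to the closure), and show that the contribution of the lateral surface $U_\e=\{x\,:\,d(x,\de\Gamma)=\e\}$ vanishes along a sequence. This works because $\de\Gamma$ is a smooth $(N-2)$-dimensional manifold, so $\left|\{d(x,\de\Gamma)<\delta\}\right|\sim\delta^2$; then coarea and Cauchy--Schwarz with $F\in{\rm L}^2(\O)$ give $\frac{1}{\delta}\int_{\O\cap \{d(x,\de\Gamma)<\delta\}}{|F|}\lesssim \left(\int_{\O\cap \{d(x,\de\Gamma)<\delta\}}{|F|^2}\right)^{\frac12}\rightarrow 0$, and a mean-value selection of $\e_j\in(0,\delta_j)$ kills the tube term exactly as your selection kills the cap term. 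Note that the tube estimate genuinely uses the ${\rm L}^2$ hypothesis (the edge has codimension two), whereas for the cap ${\rm L}^{\frac{N}{N-1}}$ already suffices, as in the paper. Finally, identifying the limit of the truncated boundary integrals with $\int_{\Gamma}{\la F,\nu\ra\,{\rm d}\sigma}+\int_{\Gamma_1\smallsetminus\{O\}}{\la F,\nu\ra\,{\rm d}\sigma}$ should be done by monotone convergence along the exhausting boundary pieces, rather than asserted outright.
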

\begin{proof}
The problem relies on the lack of regularity for the vector field $F$ at the non-regular part of $\de\O$, i.e. at the vertex (in the case $O\in\Gamma_1$) and at $\de\Gamma$ (where $\overline{\Gamma}$ and $\overline{\Gamma}_1$ intersect). We argue by approximating $\O$ by domains obtained by chopping off a tubular neighborhood of $\de\Gamma$ and a neighborhood of $O$. Since $\de\Gamma$ and $O$ are far apart, we can first divide $\O$ as $\O=\O^1\cup\O^2$ where $\{O\}\in\de\O^1$ and $\overline{\Gamma}\subset\de\O^2$ (in the case $O\in\Gamma_1$, otherwise $\O^1=\emptyset$). We then define, for small $\e>0$,
$$\O^1_\e:=\{x\in\O^1\,:\, |x|>\e\}\quad\O^2_\e=\{x\in\O^2\,:\, d(x,\de\Gamma)>\e \}.$$
For any small $\e_1,\e_2>0$ we have $F\in C^1(\overline{\O}^1_{\e_1})\cap C^1(\overline{\O}^2_{\e_2})$ and we can write
$$\int_{\O^1_{\e_1}}{\div\, F} + \int_{\O^2_{\e_2}}{\div\, F}= \int_{\O\cap \de B_{\e_2}(0)}{\left\langle F,\nu\right\rangle\,{\rm d}\sigma} + \int_{\O\cap U_{\e_2}}{\left\langle F,\nu\right\rangle\,{\rm d}\sigma} + \int_{G_{\e_1}\cup G_{\e_2}}{\left\langle F,\nu\right\rangle\,{\rm d}\sigma},$$
where $U_\e=\{x\,:\,d(x,\de\Gamma)=\e\}$, $G_{\e_1}=\de\O^1_{\e_1}\smallsetminus\left(\O\cap\de B_{\e_1}(0)\right)$, and $G_{\e_2}=\de\O^2_{\e_2}\smallsetminus\left(\O\cap U_{\e_2}\right)$. From the assumptions, the left-hand side converges to $\int_{\O}{\div\, F}$ as $\e_1,\e_2\rightarrow 0$. Moreover, exploiting $|F|\in {\rm L}^2(\O)$, we prove that there exist two sequences $\{\e^1_j\}, \{\e^2_j\}$ converging to $0$ (which we can assume to be monotone decreasing) such that
\begin{equation}\label{claimreg}
\int_{\O\cap \de B_{\e^1_j}(0)}{\left\langle F,\nu\right\rangle\,{\rm d}\sigma}\rightarrow 0\quad\mbox{and}\quad \int_{\O\cap U_{\e^2_j}}{\left\langle F,\nu\right\rangle\,{\rm d}\sigma}\rightarrow 0.
\end{equation}
Let us prove this claim. Denoting $f=|F|$, we have that the functions $\rho\mapsto \int_{\O\cap \de B_{\rho}(0)}{f\,{\rm d}\sigma}$ and $\rho\mapsto \int_{\O\cap U_{\rho}}{f\,{\rm d}\sigma}$ are in ${\rm L}^{1}((0,1))$ (by coarea formula). Since they are $L^{1}$-functions, we have that, for any $\delta_j=\frac{1}{j}$ with $j\in\N$, there exists $\e^1_j, \e^2_j \in \left(0,\delta_j\right)$ such that
\begin{eqnarray*}
&&\int_{\O\cap \de B_{\e^1_j}(0)}{f\,{\rm d}\sigma}\leq \frac{1}{\delta_j}\int_{0}^{\delta_j}{\int_{\O\cap \de B_{\rho}(0)}{f\,{\rm d}\sigma}\,{\rm d}\rho}=\frac{1}{\delta_j}\int_{\O\cap B_{\delta_j}(0)}{f}\leq \\
&&\leq \frac{1}{\delta_j}\left(\int_{\O\cap B_{\delta_j}(0)}{f^{\frac{N}{N-1}}}\right)^{\frac{N-1}{N}}\left|{\O\cap B_{\delta_j}(0)}\right|^{\frac{1}{N}}\leq |B_1|^{\frac{1}{N}} \left(\int_{\O\cap B_{\delta_j}(0)}{f^{\frac{N}{N-1}}}\right)^{\frac{N-1}{N}},
\end{eqnarray*}
and also (since $\de\Gamma$ is a smooth $(N-2)$-dimensional surface and $|\{x\,:\,d(x,\de\Gamma)<\e\}|\sim\e^2$)
\begin{eqnarray*}
&&\int_{\O\cap U_{\e^2_j}}{f\,{\rm d}\sigma}\leq \frac{1}{\delta_j}\int_{0}^{\delta_j}{\int_{\O\cap U_{\rho}}{f\,{\rm d}\sigma}\,{\rm d}\rho}=\frac{1}{\delta_j}\int_{\O\cap \{d(x,\de\Gamma)<\delta_j\}}{f}\leq \\
&&\leq \left(\int_{\O\cap \{d(x,\de\Gamma)<\delta_j\}}{f^2}\right)^{\frac{1}{2}}\frac{\left|{\O\cap \{d(x,\de\Gamma)<\delta_j\}}\right|^{\frac{1}{2}}}{\delta_j}\lesssim  \left(\int_{\O\cap \{d(x,\de\Gamma)<\delta_j\}}{f^2}\right)^{\frac{1}{2}}.
\end{eqnarray*}
Since we have $f\in {\rm L}^{2}(\O)\subseteq{\rm L}^{\frac{N}{N-1}}(\O)$, we have that both the right-hand sides converge to $0$ as $\delta_j=\frac{1}{j}\rightarrow 0$. This proves \eqref{claimreg}. We thus deduce that the term 
$$\int_{G_{\e^1_j}\cup G_{\e^2_j}}{\left\langle F,\nu\right\rangle\,{\rm d}\sigma}\rightarrow \int_{\O}{\div\, F}\quad\mbox{ as }j\rightarrow+\infty.$$  
On the other hand, being $G_{\e^1_j}\cup G_{\e^2_j}$ a monotone sequence of sets which exhaust $\Gamma\cup\Gamma_1\smallsetminus\{O\}$, one can easily get (by Beppo Levi's theorem) also
$$\int_{G_{\e^1_j}\cup G_{\e^2_j}}{\left\langle F,\nu\right\rangle\,{\rm d}\sigma} \rightarrow \int_{\Gamma\cup\Gamma_1\smallsetminus\{O\}}{\left\langle F,\nu\right\rangle\,{\rm d}\sigma}.$$
This completes the proof of the desired identity.
\end{proof}
In a similar way we get the following maximum principle
\begin{lemma}\label{mph}
Fix a sector-like domain $\O$. Let $h\in C^2(\O)\cap C^1(\Gamma\cup \Gamma_1\smallsetminus\{O\})$ satisfy
$$\begin{cases}
   -\Delta h \geq0 & \mbox{ in } \O, \\
   h= 0 & \mbox{ on } \Gamma,\\
	\frac{\de h}{\de\nu}\geq 0 & \mbox{ on } \Gamma_1\smallsetminus\{O\}.
\end{cases}$$
If in addition $h\in {\rm L}^{\infty}(\O)\cap {\rm W}^{1,2}(\O)$, then $h\geq 0$. 
\end{lemma}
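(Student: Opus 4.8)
The plan is to prove the contrapositive-free statement directly by showing that the negative part of $h$ has vanishing gradient, so that the Dirichlet datum on $\Gamma$ forces it to be identically zero. Set $w:=h^-=\max(-h,0)$. Since $h\in {\rm L}^\infty(\O)\cap {\rm W}^{1,2}(\O)$, the truncation $w$ enjoys the same regularity (the negative part of a ${\rm W}^{1,2}$ function is again ${\rm W}^{1,2}$), with $w\geq 0$ in $\O$, $w=0$ on $\Gamma$ because $h=0$ there, and $w$ continuous up to $\Gamma\cup\Gamma_1\smallsetminus\{O\}$. Moreover $\nabla w=-\nabla h\,\chi_{\{h<0\}}$, so that $\nabla w\cdot\nabla h=-|\nabla w|^2$ pointwise and $w$ is supported in $\{h<0\}$.

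Next I would apply the divergence identity of Lemma \ref{divsect} to the vector field $F:=w\,\nabla h$. This field lies in ${\rm L}^2(\O)$ since $w\in {\rm L}^\infty(\O)$ and $\nabla h\in {\rm L}^2(\O)$, and its divergence equals $\div(F)=\nabla w\cdot\nabla h+w\,\Delta h=-|\nabla w|^2+w\,\Delta h$. The crucial sign observation is that $w\,\Delta h\leq 0$ everywhere: on the support $\{h<0\}$ of $w$ the hypothesis $-\Delta h\geq 0$ gives $\Delta h\leq 0$, while off this set $w=0$; in particular $\div(F)\leq 0$. On the boundary we have $\langle F,\nu\rangle=w\,\frac{\de h}{\de\nu}$, which vanishes on $\Gamma$ (where $w=0$) and is nonnegative on $\Gamma_1\smallsetminus\{O\}$ (where $w\geq 0$ and $\frac{\de h}{\de\nu}\geq 0$). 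Hence Lemma \ref{divsect} yields
\begin{equation*}
\int_\O\big(-|\nabla w|^2+w\,\Delta h\big)\,{\rm d}x=\int_{\Gamma_1\smallsetminus\{O\}}w\,\frac{\de h}{\de\nu}\,{\rm d}\sigma\geq 0,
\end{equation*}
so that $\int_\O|\nabla w|^2\,{\rm d}x\leq \int_\O w\,\Delta h\,{\rm d}x\leq 0$. Since the left-hand side is nonnegative, I conclude $\int_\O|\nabla w|^2\,{\rm d}x=0$.

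This forces $\nabla w=0$ almost everywhere, hence $w$ is constant on the connected domain $\O$; as $w=0$ on $\Gamma$, which has positive $\textsc{H}_{N-1}$-measure, the constant must be $0$, giving $w\equiv 0$ and therefore $h\geq 0$, as claimed. The main obstacle is, as for Lemma \ref{divsect}, the rigorous justification of the integration by parts, which has to cope with two separate difficulties. First, $w=h^-$ is merely Lipschitz, so $F=w\,\nabla h$ fails to be $C^1$ across $\{h=0\}$; this is harmless and can be removed by replacing $w$ with a smooth truncation $\beta_\e(h)$, where $\beta_\e$ is a smooth approximation of $t\mapsto t^-$ that vanishes on $\{t\geq 0\}$ and has $-1\leq\beta_\e'\leq 0$, applying Lemma \ref{divsect} to the genuinely $C^1$ field $\beta_\e(h)\,\nabla h$ and letting $\e\to 0$. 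Second, and more delicate, is the control of the boundary contributions near the vertex $O$ and near the edge $\de\Gamma$ together with the a priori integrability of $\div(F)$: here the favorable sign $\div(F)\leq 0$ is decisive, for it lets one pass to the limit by monotone convergence along the exhausting sequences $\{\e^1_j\},\{\e^2_j\}$ constructed in the proof of Lemma \ref{divsect}, so that $\div(F)\in {\rm L}^1(\O)$ is recovered a posteriori from the finiteness of the boundary integral on $\Gamma_1\smallsetminus\{O\}$.
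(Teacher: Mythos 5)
Your proposal is correct and follows essentially the same route as the paper: the same test field $F=h^-\nabla h$, the same one-sided divergence argument run through the approximation scheme of Lemma \ref{divsect} (with the favorable sign $\div(F)\leq 0$ compensating for the lack of a priori ${\rm L}^1$-integrability, exactly as in the paper, where $\div(F)\leq -|Dh^-|^2$ is used on the exhausting subdomains), and the same boundary sign considerations on $\Gamma$ and $\Gamma_1$. The only cosmetic deviations are your explicit smoothing $\beta_\e(h)$ of the truncation, which the paper handles implicitly, and your conclusion via $\nabla h^-\equiv 0$ plus connectedness and the trace on $\Gamma$, where the paper instead invokes the Poincar\'e inequality for functions vanishing on $\Gamma$ with $\textsc{H}_{N-1}(\Gamma)>0$.
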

\begin{proof}
We want to prove that the negative part $h^-\equiv 0$. Let us define the vector field
$$F=h^-D h\in {\rm L}^{2}(\O).$$
Since $\Delta h\leq 0$, it also holds almost everywhere that $\div(F)\leq -|D h^-|^2\in {\rm L}^{1}(\O)$. We are then in the position to argue as in the previous lemma by approximating $\O$ by the sequence $\O_\e$. This gives
\begin{eqnarray*}
&&-\int_{\O_\e}{|D h^-|^2}\geq \int_{\O_\e}{\div(F)}= \int_{\O\cap \de B_\e(0)}{\left\langle F,\nu\right\rangle\,{\rm d}\sigma} + \int_{\O\cap U_\e}{\left\langle F,\nu\right\rangle\,{\rm d}\sigma} + \int_{G_\e}{\left\langle F,\nu\right\rangle\,{\rm d}\sigma} \\
&&\geq \int_{\O\cap \de B_\e(0)}{\left\langle F,\nu\right\rangle\,{\rm d}\sigma} + \int_{\O\cap U_\e}{\left\langle F,\nu\right\rangle\,{\rm d}\sigma},
\end{eqnarray*}
where in the last inequality we used the boundary conditions for $h$ in $\Gamma\cup \Gamma_1\smallsetminus\{O\}$. As in the proof of Lemma \ref{divsect} we can say that the last two integrals converge to $0$ up to subsequences. Therefore we get
$$\int_{\O}{|D h^-|^2}\leq 0.$$
Then, by the Poincar\'e inequality, which still holds since $\textsc{H}_{N-1}(\Gamma)>0$ and $h^-=0$ on $\Gamma$, we get $h^-\equiv 0$ in $\O$.
\end{proof}
We state as corollary what we are going to need in the proofs of our main results.
\begin{corollary}\label{newcor}
Fix a sector-like domain $\O$. Let $u$ be a classical solution of \eqref{mixedtype} such that $u\in{\rm W}^{1,\infty}(\O)$. Then $u>0$ in $\O\cup\Gamma_1$.\\
Moreover, if $u\in{\rm W}^{1,\infty}(\O)\cap{\rm W}^{2,2}(\O)$ is a classical solution of \eqref{serrintype} and if the function $v:=|D u|^2+\frac{2}{N}u$ satisfies
$$\begin{cases}
   -\Delta v \leq 0 & \mbox{ in } \O, \\
   v= c^2> 0 & \mbox{ on }\Gamma,\\
	\frac{\de v}{\de\nu}\leq 0 & \mbox{ on }\Gamma_1\smallsetminus\{O\},
\end{cases}$$
then $v\leq c^2$ in $\O$.
\end{corollary}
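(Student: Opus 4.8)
The plan is to derive both statements from the mixed--boundary maximum principle already established in Lemma \ref{mph}, upgrading to strict positivity in the first part by invoking the classical strong maximum principle together with the Hopf boundary lemma.

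For the first assertion I would first apply Lemma \ref{mph} directly with $h=u$. The hypotheses are immediate: $-\Delta u=1\geq 0$ in $\Omega$, $u=0$ on $\Gamma$, and $\frac{\partial u}{\partial\nu}=0\geq 0$ on $\Gamma_1\smallsetminus\{O\}$, while $u\in W^{1,\infty}(\Omega)$ gives $u\in L^{\infty}(\Omega)\cap W^{1,2}(\Omega)$ since $\Omega$ is bounded (and a classical solution of \eqref{mixedtype} has the regularity $C^2(\Omega)\cap C^1(\Gamma\cup\Gamma_1\smallsetminus\{O\})$ required by the lemma). This yields $u\geq 0$ in $\Omega$. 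To obtain strict positivity I would argue by contradiction. In the interior $u$ is superharmonic, $\Delta u=-1\leq 0$; were $u$ to vanish at an interior point it would attain its minimum value $0$ there, and the strong maximum principle would force $u\equiv 0$, contradicting $-\Delta u=1$. Hence $u>0$ in $\Omega$. On $\Gamma_1\smallsetminus\{O\}$, where $\partial\Sigma$ is smooth and so the interior sphere condition holds, I would suppose $u(x_0)=0$: since $u>0$ inside and $x_0$ is a boundary minimum, the Hopf boundary lemma applied to the superharmonic $u$ yields $\frac{\partial u}{\partial\nu}(x_0)<0$, contradicting $\frac{\partial u}{\partial\nu}=0$. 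Thus $u>0$ on $\Gamma_1\smallsetminus\{O\}$ as well.

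For the second assertion I would reduce again to Lemma \ref{mph} by setting $h:=c^2-v$. The differential inequality and the boundary data translate directly: $-\Delta h=\Delta v\geq 0$ (from $-\Delta v\leq 0$), $h=c^2-c^2=0$ on $\Gamma$, and $\frac{\partial h}{\partial\nu}=-\frac{\partial v}{\partial\nu}\geq 0$ on $\Gamma_1\smallsetminus\{O\}$. It then remains to verify the integrability and regularity demanded by the lemma. Interior elliptic regularity makes $v\in C^{\infty}(\Omega)$, while the assumed boundary regularity of the classical solution gives $v\in C^1(\Gamma\cup\Gamma_1\smallsetminus\{O\})$, so that the boundary conditions imposed on $v$ are meaningful. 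For the global bounds I would compute $Dv=2\,D^2u\,Du+\frac{2}{N}Du$ and use $Du\in L^{\infty}(\Omega)$ together with $D^2u\in L^2(\Omega)$ to conclude $Dv\in L^2(\Omega)$; combined with $v\in L^{\infty}(\Omega)$ (again from $u\in W^{1,\infty}(\Omega)$) this gives $h\in L^{\infty}(\Omega)\cap W^{1,2}(\Omega)$. Applying Lemma \ref{mph} to $h$ then yields $h\geq 0$, that is $v\leq c^2$ in $\Omega$.

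The verification of the signs in the two systems is routine; the one genuinely delicate point is the $W^{1,2}$--bound for $v$ in the second assertion, which is precisely where the hypotheses $u\in W^{1,\infty}(\Omega)\cap W^{2,2}(\Omega)$ enter: the product $D^2u\,Du$ lies in $L^2$ only because the second derivatives are square--integrable and the first derivatives are bounded. I expect this interplay to be the crux, the remainder following mechanically from Lemma \ref{mph} and the standard strong--maximum--principle toolbox.
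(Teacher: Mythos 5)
Your proposal is correct and takes essentially the same route as the paper's proof, which likewise applies Lemma \ref{mph} first to $u$ (upgrading $u\geq 0$ to strict positivity in $\O\cup\Gamma_1$ via the strong maximum principle and Hopf's lemma) and then to $h=c^2-v$, using $u\in W^{1,\infty}(\O)\cap W^{2,2}(\O)$ to secure the integrability hypotheses. Your explicit check that $Dv=2\,D^2u\,Du+\frac{2}{N}Du\in L^2(\O)$ because $Du$ is bounded and $D^2u$ is square-integrable is exactly the detail the paper's two-line proof leaves implicit.
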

\begin{proof}
The previous Lemma yields $u\geq 0$ in $\O$. By strong maximum principle and Hopf lemma we get $u>0$ in $\O\cup\Gamma_1$.\\
For the second part, we just notice that we can apply the previous lemma to the function $h=c^2-v$ thanks to the assumption $u\in{\rm W}^{1,\infty}(\O)\cap{\rm W}^{2,2}(\O)$.
\end{proof}
Note that we will show in Section \ref{Weinb} that the function $v$ actually satisfies the boundary condition on $\Gamma_1$ because of the convexity of the sector-like domain $\O$.

We end this preliminary section by providing the lemma which will be used to identify the center $p_0$ of the sphere cutting the sector-like domain $\O$ in Theorem \ref{unouno} or the CMC surface in Theorem \ref{unodue}. To prove this, we follow the proof given in \cite[Lemma 4.10]{RitRos} providing all details for the convenience of the reader.

\begin{lemma}\label{whoisp2}
Let $\Sigma$ be a convex cone such that $\de\Sigma\smallsetminus\{O\}$ is smooth. Assume that there exist a point $p_0\in\RN$ and $R>0$ such that
$$S:=\de B_R(p_0)\cap\Sigma\mbox{ and }\de\Sigma\mbox{ meet orthogonally at the points of }\de S\subset\de\Sigma.$$ 
Then one of the following two possibilities holds:
\begin{itemize}
\item[(i)] $p_0=O$;
\item[(ii)] $p_0\in\de\Sigma\smallsetminus \{O\}$ and $S$ is a half-sphere lying over a flat portion of $\de\Sigma$.
\end{itemize}
\end{lemma}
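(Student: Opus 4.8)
The plan is to reduce everything to a single pointwise identity along $\de S$ and then to exploit the smoothness of $\de\Sigma\smallsetminus\{O\}$ together with the cone structure. Write $\nu$ for the outer unit normal to $\de\Sigma$. Since $\de\Sigma\smallsetminus\{O\}$ is a cone, $\nu$ depends only on the direction $x/|x|$, and each generating ray is tangent to $\de\Sigma$, so that $\la x,\nu\ra=0$ on $\de\Sigma\smallsetminus\{O\}$. On the other hand the outer unit normal to the sphere $\de B_R(p_0)$ at a point $x$ is $\frac{x-p_0}{R}$, and the orthogonality of $S$ and $\de\Sigma$ along $\de S$ means precisely that these two normals are perpendicular, i.e. $\la x-p_0,\nu\ra=0$ on $\de S$. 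Subtracting the two relations I obtain the key identity
\begin{equation}\label{keyp0}
\la p_0,\nu_x\ra=0\qquad\mbox{for every }x\in\de S.
\end{equation}
If $p_0=O$ we are in case (i) and there is nothing left to prove, so from now on I assume $p_0\neq O$.

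Next I would record that, because $\nu$ is radially constant, \eqref{keyp0} holds on the whole cone $C=\{tx\,:\,t>0,\ x\in\de S\}$ generated by $\de S$, which is a relatively open subset of $\de\Sigma\smallsetminus\{O\}$. Thus the constant vector $p_0$ is everywhere tangent to $\de\Sigma$ along $C$; its flow is the translation $x\mapsto x+sp_0$, which therefore preserves $\de\Sigma$ locally on $C$. Since $\de\Sigma$ is also invariant under the dilations $x\mapsto tx$, combining the two one-parameter families shows that near any point of $C$ the surface $\de\Sigma$ contains a piece of the two-plane $\mathrm{span}\{x,p_0\}$; hence $\de\Sigma$ is flat along $C$ and the normal $\nu$ is locally constant there. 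For $N\geq 3$ the boundary $\de S\cong S^{N-2}$ is connected, so $\nu\equiv\eta_0$ is a single constant vector along $\de S$. For $N=2$ the set $\de S$ consists of two points lying on two rays: if their normals differed, then \eqref{keyp0} would force $p_0$ to lie on two distinct lines through $O$, whence $p_0=O$, contrary to assumption, so again the normal is constant.

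It then remains to read off case (ii) from the flatness. Because $\nu\equiv\eta_0$ along $\de S$, the set $\de S$ lies in the flat face $F_0=\de\Sigma\cap\Pi$ contained in the hyperplane $\Pi=\{\,\la\cdot,\eta_0\ra=0\,\}$, and \eqref{keyp0} gives $\la p_0,\eta_0\ra=0$, i.e. $p_0\in\Pi$. Since near $\de S$ the boundary $\de\Sigma$ coincides with $\Pi$ and $p_0$ is the centre of the sphere, the equator $\de B_R(p_0)\cap\Pi$ surrounds its centre $p_0\in\Pi=\de\Sigma$; consequently $S=\de B_R(p_0)\cap\Sigma$ is exactly the half-sphere lying on the $\Sigma$-side of the flat portion $F_0$, and $p_0\in\de\Sigma\smallsetminus\{O\}$. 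This is precisely alternative (ii).

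The step I expect to be the main obstacle is the rigidity passage in the second paragraph: turning the infinitesimal information \eqref{keyp0} (a pointwise orthogonality) into the global geometric conclusion that $\de\Sigma$ is flat along $C$. The delicate points are to justify that the translation flow genuinely preserves $\de\Sigma$ (and not merely to first order), invoking smoothness; that the two rulings force an honest flat piece rather than an asymptotic one; and, most importantly, the continuity/connectedness argument that upgrades ``locally constant normal'' to ``constant normal along $\de S$''. This is exactly where the smoothness of $\de\Sigma\smallsetminus\{O\}$ is indispensable and where the low-dimensional case $N=2$ has to be handled on its own.
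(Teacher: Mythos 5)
Your opening identity $\la p_0,\nu_x\ra=0$ for all $x\in\de S$ is correct, and so are its extension to the subcone $C$ over $\de S$ and the observation that the constant field $p_0$, being tangent to $\de\Sigma$ on the relatively open set $C$, flows inside $\de\Sigma$ there. The genuine gap is the very next inference, which you yourself flag as the main obstacle: from the translation and dilation invariances you get that $\de\Sigma$ contains, through each point of $C$, a piece of the $2$-plane $\mathrm{span}\{x,p_0\}$, and you conclude ``hence $\de\Sigma$ is flat along $C$ and the normal $\nu$ is locally constant there.'' This implication is valid only when $N\leq 3$, where that $2$-plane piece has the full dimension $N-1$ of the hypersurface. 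For $N\geq 4$ the two invariances yield only that $\de\Sigma$ is locally a cylinder in the direction $p_0$ over an $(N-2)$-dimensional cone in $p_0^{\perp}$, and such a hypersurface can be ruled by those $2$-planes without being flat: for instance a piece of $(\mbox{circular cone in }\R^3)\times\R p_0\subset\R^4$ is dilation- and $p_0$-translation-invariant, smooth, non-flat, and has non-constant normal. So the constancy of $\nu$ along $\de S$ --- the hinge on which your entire endgame turns --- is unproved for $N\geq 4$. A telling symptom is that your rigidity step makes no use of the convexity of $\Sigma$, whereas convexity must enter essentially: the paper's proof invokes it three separate times.

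There are secondary gaps at exactly the places where convexity should appear. You assert $\de S\cong S^{N-2}$ (hence connectedness) without proof; and in the last paragraph you gloss over why $p_0$ lies on $\de\Sigma$ itself rather than merely on the hyperplane $\Pi$, why $\de S$ is the full equator, and why the disc bounded by $\de S$ lies on $\de\Sigma$ (the ``lying over a flat portion'' part of alternative (ii)). For comparison, the paper argues globally rather than by local rigidity: it maximizes $\frac{1}{2}|x|^2$ over $\de S$ and uses Lagrange multipliers plus your same orthogonality relations to place $p_0$ on the line through the maximizer $\bar y$, so $p_0\in\de\Sigma\cup(-\de\Sigma)$; it rules out the $-\de\Sigma$ side via the equality case of the triangle inequality applied to $|y-p_0|=R$ for all $y\in\de S$; it then shows $[p_0,x]\subset\de\Sigma$ for every $x\in\de S$ by combining the tangency of $\nu^S_x=\frac{x-p_0}{R}$ with the supporting-hyperplane property of the convex cone, deduces $\de S\subset p_0+T_{p_0}\de\Sigma$, hence that $\de S$ is a great circle and $S$ a half-sphere, and finally uses convexity once more to conclude that the disc bounded by $\de S$ lies on $\de\Sigma$. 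That route works in every dimension and needs no connectedness of $\de S$; to salvage your argument you would have to replace the flatness claim for $N\geq 4$ by some genuinely global input of this kind.
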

\begin{proof}
 Let us divide the proof in multiple steps.\\
\noindent{\it Step I. } Consider $\max_{x\in\de S}{\frac{1}{2}|x|^2}$. This is attained at a point $\bar{y}\in\de S\smallsetminus\{O\}$. We claim that
$$p_0\in \left\langle \bar{y}\right\rangle \subset \left(\de\Sigma\right)\cup \left(-\de\Sigma\right),$$
where by $\left\langle \bar{y}\right\rangle$ we mean the straight line passing through $O$ and $\bar{y}$. Let's prove this claim. By the definition of $\bar{y}$ and the Lagrange multipliers theorem, we have that $\bar{y}=\nabla\left(\frac{1}{2}|\cdot|^2\right)(\bar{y})$ is a linear combination of $\nu^S_{\bar{y}}$ and $\nu^\Sigma_{\bar{y}}$ (which are the outward unit normals respectively to the ball and to the cone at the point $\bar{y}$). By assumption $\nu^S$ and $\nu^\Sigma$ are orthogonal on $\de S$. Moreover, $\bar{y}$ and $\nu^\Sigma_{\bar{y}}$ are orthogonal, since $\Sigma$ is a cone. This implies that
$$\bar{y}\mbox{ is parallel to }\nu^S_{\bar{y}}=\frac{\bar{y}-p_0}{R},$$
and in particular that $p_0$ is a multiple of $\bar{y}$.\\
\noindent{\it Step II. } Let us prove that
$$p_0\in\de\Sigma.$$
We can assume that $\Sigma$ is not the flat cone (i.e. a half-space), since otherwise $-\de\Sigma=\de\Sigma$ and we have nothing to prove. By Step I we already know that $p_0=t\cdot\bar{y}$ for some $t\in\R$. We want to rule out the possibility that $t<0$. To do this, we assume by contradiction that
$$p_0=t\cdot\bar{y}\,\,\mbox{ for }t<0.$$
This implies that $|\bar{y}-p_0|=|\bar{y}|+|p_0|$. On the other hand, the distance from the center of the sphere $p_0$ is the same for every point $y\in\de S$. Thus we get
$$|\bar{y}|+|p_0|=|\bar{y}-p_0|=|y-p_0|\leq |y|+|p_0|\leq |\bar{y}|+|p_0| \quad\forall\,y\in\de S,$$
where the last inequality holds by the very definition of $\bar{y}$. Hence all the previous inequalities are in fact equalities for all $y\in\de S$, in particular there is equality in the triangle inequality which says that every $y\in\de S$ is parallel to $p_0\neq 0$. This is a contradiction in $\R^N$ for any $N\geq 3$. It is a contradiction also in $\R^2$ since we assumed that $\de\Sigma$ is not a hyperplane.\\
\noindent{\it Step III. } Denoting by $[p_0,x]$ the segment connecting $p_0$ and $x$, we want to prove that
$$[p_0,x]\subset \de\Sigma\quad \forall\, x\in\de S\smallsetminus\{O\}.$$ 
If $p_0=O$, this follows just from the definition of a cone since $\de S\subset\de \Sigma$. Thus, we can assume $p_0\neq O$. Pick any $x\in\de S\smallsetminus\{O\}$. We know that $\frac{x-p_0}{R}=\nu^S_x$ is tangent to $\de\Sigma$ at the point $x$ by the orthogonality assumption. Hence we get
$$[p_0,x]\subset\, x+\left\langle \nu^S_x\right\rangle \subset\, x+ T_x\de\Sigma.$$
On the other hand we have
$$[p_0,x]\subset \overline{\Sigma}$$
since both $p_0$ and $x$ belong to $\de\Sigma\smallsetminus\{O\}$ (by Step II) and since $\overline{\Sigma}$ is convex. If we assume the existence of $\bar{x}\in (p_0,x)$ such that $\bar{x}\in\Sigma=\overline{\Sigma}\smallsetminus\de\Sigma$, this would be in contradiction with the fact that $\bar{x}$ belongs to the supporting hyperplane $x+ T_x\de\Sigma$. This proves that $[p_0,x]\subset \de\Sigma$ as desired.\\
\noindent{\it Step IV. } We finally prove that
$$p_0\neq O\quad\Longrightarrow\quad S\mbox{ is a half-sphere lying over a flat portion of }\de\Sigma.$$
Suppose $p_0\neq O$. For any $x\in\de S \smallsetminus\{O\}$, we have by Step III that $p_0+ t\cdot(x-p_0)\in\de\Sigma$ for all $t\in[0,1]$, which implies that $x-p_0\in T_{p_0}\de\Sigma$. This holds also for $x=O$. Hence we have proved that
$$\de S\subset p_0 + T_{p_0}\de\Sigma.$$
Therefore $\de S\subseteq \de B_R(p_0)$ lies on a hyperplane passing through the center $p_0$. This means that $\de S$ is a great circle and $S$ is a half-sphere. By convexity of the cone, also the disc bounded by $\de S$ has to lie on $\de\Sigma$. This completes the proof.
\end{proof}

\section{Overdetermined problems in convex cones}\label{Weinb}

\vskip 0.2cm

We are now ready to prove Theorem \ref{unouno}. As stated in the Introduction, we provide two proofs following the approaches in \cite{BNST} and \cite{wei}. Let us remark that in all applications of the divergence theorem below we are going to exploit Lemma \ref{divsect} without any further mention.
\begin{proof}[Proof of Theorem \ref{unouno}]
Consider the solution $u$ of our overdetermined problem \eqref{serrintype}, which, by standard elliptic regularity theory, has classical derivatives on $\Gamma\cup\Gamma_1\cup\{O\}$ and it is positive in $\O$ by Corollary \ref{newcor}. First of all we get
\begin{eqnarray*}
&\,&\int_\O{|D u|^2-u\,{\rm{d}}x}=\int_\O{|D u|^2+\left(\Delta u\right)u\,{\rm{d}}x}=\int_\O{\div\left(uD u\right)\,{\rm{d}}x}=\\
&=&\int_{\Gamma}{ u \frac{\de u}{\de\nu}{\rm{d}}\sigma} + \int_{\Gamma_1}{u\frac{\de u}{\de\nu}{\rm{d}}\sigma}=0\nonumber,
\end{eqnarray*}
so that
\begin{equation}\label{samedu}
\int_\O{|D u|^2\,{\rm{d}}x}=\int_\O{u\,{\rm{d}}x}.
\end{equation}
Moreover, the overdetermined condition on $\Gamma$ gives
\begin{equation}\label{relazomgam}
|\O|=-\int_\O {\Delta u(x)\,{\rm{d}}x}=-\int_{\Gamma}{\frac{\de u}{\de\nu} {\rm{d}}\sigma}=c|\Gamma|.
\end{equation}
We need more integral identities. From the assumptions on $u$, the Green's identity, and since $\Delta(\left\langle x,D u\right\rangle)=2\Delta u + \left\langle x,D(\Delta u)\right\rangle=-2$ in $\Omega$, we deduce that
\begin{eqnarray*}
&\,&\int_\O{2u-\left\langle x,Du\right\rangle\,{\rm{d}}x}=\int_\O{\Delta u\left\langle x,D u\right\rangle - \Delta(\left\langle x,D u\right\rangle) u\,{\rm{d}}x}=\\
&=& \int_\O{\div\left(\left\langle x,D u\right\rangle D u - uD(\left\langle x,D u\right\rangle)\right)\,{\rm{d}}x}=\int_{\de\O}{\left\langle x,D u\right\rangle \frac{\de u}{\de\nu} - u\left\langle D(\left\langle x,D u\right\rangle),\nu\right\rangle{\rm{d}}\sigma}=\\
&=&\int_{\Gamma}{\left\langle x,D u\right\rangle \frac{\de u}{\de\nu}{\rm{d}}\sigma} - \int_{\Gamma_1}{u\left\langle D(\left\langle x,D u\right\rangle),\nu\right\rangle{\rm{d}}\sigma}=\\
&=&c^2\int_{\Gamma}{\left\langle x,\nu\right\rangle {\rm{d}}\sigma} - \int_{\Gamma_1}{u\left(\left\langle D u,\nu\right\rangle +\left\langle D^2u\cdot x,\nu\right\rangle\right){\rm{d}}\sigma}=\\
&=&c^2\left(\int_{\O}{\div(x)\,{\rm{d}}x}-\int_{\Gamma_1}{\left\langle x,\nu\right\rangle {\rm{d}}\sigma}\right)- \int_{\Gamma_1}{u\left\langle D^2ux,\nu\right\rangle{\rm{d}}\sigma}=\\
&=&Nc^2|\Omega|- \int_{\Gamma_1}{u\left\langle D^2ux,\nu\right\rangle{\rm{d}}\sigma}.
\end{eqnarray*}
On the other hand
\begin{eqnarray*}
&\,&\int_\O{\left\langle x,D u\right\rangle + Nu\,{\rm{d}}x}=\int_\O{\left\langle D\left(\frac{|x|^2}{2}\right),D u\right\rangle + \Delta\left(\frac{|x|^2}{2}\right) u\,{\rm{d}}x}=\\
&=&\int_\O{\div\left(uD\left(\frac{|x|^2}{2}\right)\right)\,{\rm{d}}x} =\int_{\Gamma}{u\left\langle x,\nu\right\rangle {\rm{d}}\sigma} + \int_{\Gamma_1}{u\left\langle x,\nu\right\rangle {\rm{d}}\sigma}=0.
\end{eqnarray*}
Putting together the last two relations, we get
\begin{equation}\label{intn+2}
(N+2)\int_\O{u\,{\rm{d}}x}=Nc^2|\Omega|- \int_{\Gamma_1}{u\left\langle D^2u\cdot x,\nu\right\rangle{\rm{d}}\sigma}.
\end{equation}
Now we show that
$$\left\langle D^2u(x)x,\nu\right\rangle=0\quad\mbox{ for all }x\in \Gamma_1\smallsetminus\{O\}:$$
this depends on the fact that $\Gamma_1$ is on the boundary of the cone and on the Neumann condition $\frac{\de u}{\de\nu}=0$ we imposed on $\Gamma_1$. As a matter of fact, since $\left\langle x,\nu \right\rangle =0$, $x$ is tangent to the cone and we can denote by $P$ the tangent vector field differentiating along the radial direction $x$, i.e. $P=\sum_{k=1}^N x_k\de_{x_k}$. Since also $\frac{\de u}{\de \nu}=0$ on $\Gamma_1\smallsetminus\{O\}$, we have $Du(x)$ is tangent to the cone at $x$: let us denote by $N_u$ the vector field differentiating along the tangential direction $Du$, that is $N_u=\sum_{k=1}^Nu_k(x)\de_{x_k}$. For every $x\in \Gamma_1\smallsetminus\{O\}$, we thus have
$$0=P\left(\left\langle D u,\nu\right\rangle\right)=\left\langle \nabla_P(N_u),\nu\right\rangle + h(x, D u) = \left\langle \nabla_P(N_u),\nu\right\rangle= \sum_{k,j=1}^Nx_ku_{kj}\nu_j=\left\langle D^2 u\cdot x, \nu\right\rangle.$$
Hence, \eqref{intn+2} becomes
\begin{equation}\label{primadopo}
\left(1+\frac{2}{N}\right)\int_\O{u\,{\rm{d}}x}=c^2|\Omega|.
\end{equation} 
From \eqref{relazomgam} and the definition of the mean curvature $H(x)$ of (almost every) level sets of $\{u=u(x)\}$ in \eqref{meanlevel}, we get
\begin{eqnarray*}
\int_\O {|Du|^2\,{\rm{d}}x}&=&-\int_\O {|Du|^2\Delta u\,{\rm{d}}x}\\
&=&\int_\O {2\left\langle D^2u Du,Du\right\rangle-\div\left(|Du|^2Du\right)\,{\rm{d}}x}\\
&=&\int_\O {2\left\langle D^2u Du,Du\right\rangle\,{\rm{d}}x}- \int_{\Gamma}{|Du|^2\frac{\de u}{\de\nu} {\rm{d}}\sigma}\\
&=&2(N-1)\int_\O {H(x)|Du|^3\,{\rm{d}}x}+2\int_\O {\Delta u|Du|^2\,{\rm{d}}x}+c^3|\Gamma|\\
&=&2(N-1)\int_\O {H(x)|Du|^3\,{\rm{d}}x}-2\int_\O {|Du|^2\,{\rm{d}}x}+c^2|\O|.
\end{eqnarray*}
Using \eqref{samedu} and \eqref{primadopo}, we deduce that
\begin{equation}\label{relHom}
\int_\O {H(x)|Du|^3\,{\rm{d}}x}=\frac{c^2}{N+2}|\O|.
\end{equation}
On the other hand, since $\sdu(\cdot)$ is homogeneous of degree $2$, we have $\sdu(A)=\frac{1}{2}\sum_{ij}\frac{\de\sdu(A)}{\de a_{ij}}a_{ij}$. If we denote $\mathcal{J}\sdu(A)=\left(\frac{\de\sdu(A)}{\de a_{ij}}\right)_{i,j=1}^N$, it is easy to see from the definition that $\mathcal{J}\sdu(A)=\tr(A)\mathbb{I}_N-A$. This says, by \eqref{meanlevel}, that
\begin{equation}\label{represent}
(N-1)H|Du|^3=-\left\langle \mathcal{J}\sdu(D^2 u) Du,Du \right\rangle,\quad\mbox{ and also }
\end{equation}
\begin{equation}\label{euler}
\sdu(D^2u)=\frac{1}{2}\tr\left( \mathcal{J}\sdu(D^2 u) D^2u \right)=\frac{1}{2}\div\left( \mathcal{J}\sdu(D^2 u) Du \right).
\end{equation}
Using \eqref{represent} and \eqref{euler} in \eqref{relHom}, we get
\begin{eqnarray*}
c^2|\O|&=& (N+2)\int_\O {H(x)|Du|^3\,{\rm{d}}x}=-\frac{N+2}{N-1}\int_\O {\left\langle \mathcal{J}\sdu(D^2 u) Du,Du \right\rangle\,{\rm{d}}x}\\
&=& -\frac{N+2}{N-1}\int_\O {\div\left( u\, \mathcal{J}\sdu(D^2 u) Du \right)\,{\rm{d}}x}+2\,\frac{N+2}{N-1}\int_\O {u\,\sdu(D^2u)\,{\rm{d}}x}\\
&=&-\frac{N+2}{N-1}\int_{\Gamma_1} {u \left\langle \mathcal{J}\sdu(D^2 u) Du,\nu \right\rangle\,{\rm{d}}\sigma}+2\,\frac{N+2}{N-1}\int_\O {u\,\sdu(D^2u)\,{\rm{d}}x}\\
&=&-\frac{N+2}{N-1}\int_{\Gamma_1} {u \left\langle \left( \tr(D^2u)\mathbb{I}_N-D^2u	\right) Du(x),\nu \right\rangle\,{\rm{d}}\sigma}+2\,\frac{N+2}{N-1}\int_\O {u\,\sdu(D^2u)\,{\rm{d}}x}\\
&=&\frac{N+2}{N-1}\int_{\Gamma_1} {u \left\langle D^2uDu,\nu \right\rangle\,{\rm{d}}\sigma}+2\,\frac{N+2}{N-1}\int_\O {u\,\sdu(D^2u)\,{\rm{d}}x}.
\end{eqnarray*}
Up to this point all the relations we proved are in fact equalities. By using the convexity of the cone we now show that
\begin{equation}\label{convecco}
\left\langle D^2uDu,\nu \right\rangle \leq 0 \quad\mbox{ on }\Gamma_1\smallsetminus\{O\},
\end{equation}
which, in particular, implies
\begin{equation}\label{convexityhere}
\int_{\Gamma_1} {u \left\langle D^2uDu,\nu \right\rangle\,{\rm{d}}\sigma}\leq 0.
\end{equation}
In fact, at any point of $\Gamma_1\smallsetminus\{O\}$, in our notations we get (from \eqref{II}, \eqref{defconv}, and the Neumann condition on $\Gamma_1$)
\begin{eqnarray*}
0&=&N_u(\left\langle D u, \nu \right\rangle)=\left\langle \nabla_{N_u}N_u, \nu \right\rangle + h(Du, Du)\geq \nonumber\\
&\geq& \left\langle \nabla_{N_u}N_u, \nu \right\rangle = \sum_{j,k=1}^N u_ku_{kj}\nu_j=\left\langle D^2 u D u, \nu \right\rangle.
\end{eqnarray*}
Therefore, by \eqref{convexityhere}, we have
$$c^2|\O|\leq 2\,\frac{N+2}{N-1}\int_\O {u\,\sdu(D^2u)\,{\rm{d}}x}.$$ 
We can now exploit the matrix inequality \eqref{sigmeq}, together with \eqref{primadopo}, and we get 
$$c^2|\O|\leq \frac{N+2}{N}\int_\O {u\left(\Delta u\right)^2\,{\rm{d}}x}=\frac{N+2}{N}\int_\O {u\,{\rm{d}}x}=c^2|\O|.$$
Hence, the inequalities we performed are in fact equalities. In particular we have the equality case in \eqref{sigmeq}. This says that $D^2 u (x)=\lambda(x)\mathbb{I}_N$ in $\Omega$. Since $\Delta u=-1$, it has to be
$$D^2u=-\frac{1}{N}\mathbb{I}_N\qquad\forall x\in\O.$$
By the connectedness of $\O$, there exist $A\in\R$ and $p_0\in\R^{N}$ such that
$$u(x)=\frac{A-|x-p_0|^2}{2N}$$
in $\overline{\O}$. But $u=0$ on $\Gamma$, and therefore $A>0$ and $\Gamma \subseteq \de B_{\sqrt{A}}(p_0)\cap\Sigma$. By connectedness, $\Gamma$ is actually equal to $\de B_{\sqrt{A}}(p_0)\cap\Sigma$. This implies that on $\Gamma$ the normal is given by $\nu=\frac{x-p_0}{\sqrt{A}}$, and we have
$$-c=\left\langle Du,\nu \right\rangle=-\frac{|x-p_0|^2}{N\sqrt{A}}=-\frac{\sqrt{A}}{N}\qquad \Longleftrightarrow\qquad A=N^2c^2.$$
We have then proved that
$$\Gamma= \de B_{Nc}(p_0)\cap\Sigma\quad\mbox{ and }\quad u=\frac{N^2c^2-|x-p_0|^2}{2N}.$$
We can now conclude by invoking Lemma \ref{whoisp2}. In fact, we have proved in particular that the function $u$ (which is now explicit) is $C^1$ up to $\de\Gamma$. Thus, for any $x\in\de\Gamma$, the normal to $\Gamma$ is parallel to $Du(x)$ and it is orthogonal to the normal to the cone by the Neumann condition. Lemma \ref{whoisp2} provides then the two possibilities stated in Theorem \ref{unouno}.
\end{proof}

In the above proof of Theorem \ref{unouno}, the convexity of the cone comes into play in order to infer \eqref{convecco}. Such a pointwise inequality will be used also in the proof we will provide at the end of this section following \cite{wei} (and even in the proof of Theorem \ref{unodue} in Section \ref{Reilly}).\\
Nevertheless, as we highlighted during the proof, the inequality which is needed to carry the previous proof forward is in fact the weaker integral condition \eqref{convexityhere}. This condition is not a geometric requirement for the cone, but it involves the behavior of the solution $u$ on $\Gamma_1$. It can be seen as a sort of overdetermined condition on $\Gamma_1$ to get the Serrin-type result without a convexity assumption for the cone $\Sigma$. We summarize this fact in the following
\begin{proposition}\label{ultraover}
Let $c>0$ be fixed and assume that $\Sigma$ is a cone such that $\Sigma\smallsetminus\{O\}$ is smooth. Suppose $\O$ is a sector-like domain and $u\in W^{1,\infty}(\O) \cap W^{2,2}(\O)$ is a classical solution of the following problem 
$$\begin{cases}
   -\Delta u=1 & \mbox{ in }\O, \\
   u=0 & \mbox{ on }\Gamma,\\
	 \frac{\de u}{\de\nu}=-c<0 & \mbox{ on }\Gamma,\\
	\frac{\de u}{\de\nu}=0 & \mbox{ on }\Gamma_1\smallsetminus\{O\},\\
	\int_{\Gamma_1} {u \left\langle D^2uDu,\nu \right\rangle\,{\rm{d}}\sigma}\leq 0. &\,
\end{cases}$$
Then the assertions of Theorem \ref{unouno} hold.
\end{proposition}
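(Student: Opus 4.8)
The plan is to re-run the proof of Theorem~\ref{unouno} essentially verbatim, keeping careful track of the single point at which that argument invokes the convexity of $\Sigma$. Inspecting the proof, convexity is used exactly once: to deduce the pointwise bound \eqref{convecco}, and hence the integral inequality \eqref{convexityhere}, on $\Gamma_1$. Everything else is convexity-free, so once \eqref{convexityhere} is taken as a hypothesis the statement should follow merely by reproducing the chain of identities.

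Concretely, I would first reproduce the integral identities \eqref{samedu}, \eqref{relazomgam}, \eqref{intn+2} and \eqref{primadopo}. None of these needs convexity: the only geometric input is that along $\Gamma_1$ the radial field $x$ is tangent to $\de\Sigma$ and lies in the kernel of its second fundamental form, i.e. $h(x,\cdot)=0$, which holds for \emph{any} cone because the cone is ruled by the rays through $O$; the remaining ingredients are the boundary data of \eqref{serrintype} and the regularity $u\in W^{1,\infty}(\O)\cap W^{2,2}(\O)$, which is what legitimizes every application of Lemma~\ref{divsect}. Passing to \eqref{relHom} and rewriting it through the $\sdu$-identities \eqref{represent} and \eqref{euler}, integrating by parts, and using $u=0$ on $\Gamma$, one arrives at
\begin{equation*}
c^2|\O|=\frac{N+2}{N-1}\int_{\Gamma_1}{u\left\langle D^2u\,Du,\nu\right\rangle\,{\rm d}\sigma}+2\,\frac{N+2}{N-1}\int_\O{u\,\sdu(D^2u)\,{\rm d}x},
\end{equation*}
again a pure consequence of the equation and the boundary conditions.

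At this stage, instead of appealing to convexity, I would simply quote the hypothesis \eqref{convexityhere} to discard the boundary term, obtaining $c^2|\O|\leq 2\frac{N+2}{N-1}\int_\O u\,\sdu(D^2u)\,{\rm d}x$. The Newton inequality \eqref{sigmeq} (with $\Delta u=-1$) combined with \eqref{primadopo} then sandwiches $c^2|\O|\leq c^2|\O|$, so equality must hold in \eqref{sigmeq}; since $u>0$ in $\O$ by Corollary~\ref{newcor}, this forces $D^2u=-\frac1N\mathbb{I}_N$ pointwise, whence $u(x)=\frac{N^2c^2-|x-p_0|^2}{2N}$ and $\O=\Sigma\cap B_{Nc}(p_0)$, exactly as in Theorem~\ref{unouno}.

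I expect the only delicate point -- and the natural place to locate the ``main obstacle'' -- to be the final identification of $p_0$. From the explicit form of $u$, the normal to $\Gamma$ along $\de\Gamma$ is parallel to $Du$ and, by the Neumann condition on $\Gamma_1$, orthogonal to the normal of the cone; hence $\Gamma$ meets $\de\Sigma$ orthogonally and Lemma~\ref{whoisp2} applies to yield the alternatives (i)--(ii). This last step is precisely where convexity would re-enter, since the proof of Lemma~\ref{whoisp2} uses it (in Step~III, to keep a chord between two boundary points of $\Sigma$ inside $\overline{\Sigma}$). Thus the spherical-sector characterization and the explicit solution are recovered for an arbitrary cone under the sole integral hypothesis \eqref{convexityhere}, while the refined location of $p_0$ is the genuinely convexity-dependent part of the statement.
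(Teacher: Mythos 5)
Your proposal is correct and coincides with the paper's own argument: the paper gives no separate proof of Proposition \ref{ultraover}, but states it precisely as the observation that the first proof of Theorem \ref{unouno} invokes convexity only to derive the pointwise bound \eqref{convecco}, and runs verbatim once the integral inequality \eqref{convexityhere} is assumed as a hypothesis --- including the identity $\left\langle D^2u\, x,\nu\right\rangle=0$ on $\Gamma_1\smallsetminus\{O\}$, which, as you correctly note, holds for any cone since the normal is constant along the rays. Your closing caveat about Lemma \ref{whoisp2} is also well placed: only Steps III--IV of that lemma use convexity, while Steps I--II are convexity-free and still yield $p_0=O$ or $p_0\in\de\Sigma$ (the paper itself exploits exactly this in the proof of Theorem \ref{unotreprimo}), so for a genuinely non-convex cone it is only the ``half-sphere over a flat portion'' refinement in alternative (ii) whose derivation does not carry over verbatim.
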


\begin{proof}[Second proof of Theorem \ref{unouno}]
As in \cite{wei}, we consider the function (sometimes called P-function)
$$v=|D u|^2+\frac{2}{N}u,$$
where $u$ is the solution of our overdetermined problem \eqref{serrintype}. We can compute
\begin{equation}\label{gradv}
D v=2D^2u Du+\frac{2}{N}Du.
\end{equation}
Since $\Delta u=-1$ in $\O$, we also get
\begin{equation}\label{laplacv}
\Delta v=2\left\|D^2u\right\|^2+2\left\langle Du, D(\Delta u)\right\rangle - \frac{2}{N}=2\left(\left\|D^2u\right\|^2-\frac{(\Delta u)^2}{N}\right).
\end{equation}
The matrix inequality \eqref{inequalitymatrixnorm} tells us that $\Delta v\geq 0$ in $\Omega$. Let us also check the boundary conditions for $v$. On $\Gamma$ we know that $\nu=-\frac{Du}{|Du|}$, since $u=0$ there (and $u>0$ in $\O$ by Corollary \eqref{newcor}). Thus $D u = -c\nu$ and $v\equiv c^2$ on $\Gamma$. 
By \eqref{gradv} and since $\frac{\de u}{\de \nu}=0$ on $\Gamma_1\smallsetminus\{O\}$, we have $\frac{\de v}{\de \nu}=2\left\langle D^2 u  Du, \nu\right\rangle$ in $\Gamma_1\smallsetminus\{O\}$. From the convexity assumption for the cone $\Sigma$ we infer (see \eqref{convecco}) that $\frac{\de v}{\de \nu}\leq 0$ on $\Gamma_1\smallsetminus\{O\}$. Hence the function $v$ satisfies:
$$\begin{cases}
 \Delta v\geq 0 & \mbox{ in }\O, \\
   v=c^2 & \mbox{ on }\Gamma,\\
	 \frac{\de v}{\de\nu}\leq 0 & \mbox{ on }\Gamma_1\smallsetminus\{O\}
\end{cases} $$ 
and by Corollary \ref{newcor} we have 
\begin{equation}\label{menodi}
v\leq c^2 \quad\mbox{ in }\O.
\end{equation}
On the other hand, from the integral identities proved in \eqref{samedu} and \eqref{primadopo}, we can compute the integral of the function $v$ and we get
$$\int_\O{v\,{\rm{d}}x}=\int_\O{|D u|^2+\frac{2}{N}u\,{\rm{d}}x}=\left(1+\frac{2}{N}\right)\int_\O{u\,{\rm{d}}x}=c^2|\Omega|.$$
By \eqref{menodi}, we then have
$$v\equiv c^2\quad\mbox{ in }\O.$$
In particular, we deduce from \eqref{laplacv} that
$$\left(\left\|D^2u\right\|^2-\frac{(\Delta u)^2}{N}\right)=\Delta v\equiv 0.$$
The equality case in the matrix inequality \eqref{ugu} implies that $D^2 u$ is proportional to $\mathbb{I}_N$ in $\Omega$. From now on, we can argue as in the previous proof.
\end{proof}
\begin{remark}
It is worth noticing that the proofs of Theorem \ref{unouno} do not really need that $\Gamma$ (and hence $\O$) is completely contained in $\Sigma$ as long as $\Gamma_1$ lies on $\de\Sigma$ and $\Sigma$ is convex.
\end{remark}

\section{CMC surfaces in convex cones}\label{Reilly}

\vskip 0.2cm

We provide here the proof of Theorem \ref{unodue}.
\begin{proof}[Proof of Theorem \ref{unodue}]
Let us consider the solution $u$ to the boundary problem \eqref{mixedtype}. By Corollary \ref{newcor}, we have $u>0$ in $\O$, and $|Du|$ cannot vanish in the (relative) interior of $\Gamma$ by Hopf's lemma. Thus, the function $u$ locally defines $\Gamma$ and $\nu=-\frac{Du}{|Du|}$ there. By the matrix inequality \eqref{sigmeq}, together with \eqref{euler} and \eqref{represent}, we have
\begin{eqnarray*}
|\O|&=&\int_\O {\left(\Delta u\right)^2\,{\rm{d}}x}\geq \frac{2N}{N-1}\int_\O {\sdu \left(D^2u\right)\,{\rm{d}}x}=\frac{N}{N-1}\int_\O {\div\left( \mathcal{J}\sdu(D^2 u) Du \right)\,{\rm{d}}x}\\
&=&-\frac{N}{N-1} \int_{\Gamma} {\left\langle \mathcal{J}\sdu(D^2 u) Du,\frac{Du}{|Du|} \right\rangle\,{\rm{d}}\sigma}+\frac{N}{N-1}\int_{\Gamma_1} {\left\langle \mathcal{J}\sdu(D^2 u) Du,\nu \right\rangle\,{\rm{d}}\sigma}\\
&=&N\int_{\Gamma} {H|Du|^2 \,{\rm{d}}\sigma}+\frac{N}{N-1}\int_{\Gamma_1} {\left\langle \mathcal{J}\sdu(D^2 u) Du,\nu \right\rangle\,{\rm{d}}\sigma}\\
&=&N\int_{\Gamma} {H|Du|^2 \,{\rm{d}}\sigma}+\frac{N}{N-1}\int_{\Gamma_1} {\left\langle \left( \tr(D^2u)\mathbb{I}_N-D^2u	\right) Du,\nu \right\rangle\,{\rm{d}}\sigma}\\
&=&N\int_{\Gamma} {H|Du|^2 \,{\rm{d}}\sigma}-\frac{N}{N-1}\int_{\Gamma_1} {\left\langle D^2u Du,\nu \right\rangle\,{\rm{d}}\sigma}.
\end{eqnarray*}
Once again, as in the Section \ref{Weinb}, we can exploit the convexity of the cone to say that $\left\langle D^2uDu,\nu \right\rangle\leq 0$ on $\Gamma_1\smallsetminus\{O\}$ (see \eqref{convecco}). Therefore we have
$$|\O|\geq N\int_{\Gamma} {H|Du|^2 \,{\rm{d}}\sigma}.$$
Since $H$ is constant, and by H\"older inequality, we get
\begin{eqnarray}\label{reineq}
|\O|&\geq& N\frac{\left(\int_{\Gamma} {|Du| \,{\rm{d}}\sigma}\right)^2}{\int_{\Gamma} {\frac{1}{H} \,{\rm{d}}\sigma}}=\frac{NH}{|\Gamma|}\left(\int_{\Gamma} {\left\langle Du,\nu\right\rangle \,{\rm{d}}\sigma}\right)^2 \nonumber\\
&=&\frac{NH}{|\Gamma|}\left(\int_\O {\Delta u\,{\rm{d}}x}\right)^2=NH\frac{|\O|^2}{|\Gamma|}.
\end{eqnarray}
Let us now compute the tangential gradient $D_{\Gamma}$ and the Laplace-Beltrami operator $\Delta_{\Gamma}$ of the function $\frac{1}{2}|x|^2$. We recall (see also \eqref{gradnorm} and \eqref{laplnorm}) that this yields
$$D_{\Gamma}\left(\frac{1}{2}|x|^2\right)=x-\left\langle x,\nu\right\rangle\nu\quad\mbox{ and }\quad \Delta_{\Gamma}\left(\frac{1}{2}|x|^2\right)=(N-1)(1-H\left\langle x,\nu \right\rangle)$$
for $x\in\Gamma$. Integrating over $\Gamma$ and exploiting the integral assumption \eqref{segnoint}, we get
\begin{eqnarray*}
&\,&(N-1)\int_{\Gamma} {1-H\left\langle x,\nu \right\rangle\,{\rm{d}}\sigma}=\int_{\Gamma} {\Delta_{\Gamma}\left(\frac{1}{2}|x|^2\right)\,{\rm{d}}\sigma}=\int_{\de\Gamma}{\left\langle D_{\Gamma}\left(\frac{1}{2}|x|^2\right),n_x  \right\rangle \,{\rm{d}}s}\\
&=&\int_{\de\Gamma}{\left\langle x-\left\langle x,\nu\right\rangle\nu,n_x  \right\rangle \,{\rm{d}}s}=\int_{\de\Gamma}{\left\langle x,n_x  \right\rangle \,{\rm{d}}s}\leq 0.
\end{eqnarray*}
Hence we have
$$|\Gamma|\leq H\int_{\Gamma} {\left\langle x,\nu \right\rangle\,{\rm{d}}\sigma}=H\int_{\de\O} {\left\langle x,\nu \right\rangle\,{\rm{d}}\sigma}=H\int_\O {\div(x)\,{\rm{d}}x}=NH|\O|.$$
Adding this to the inequality in \eqref{reineq}, we get
$$|\O|\geq NH\frac{|\O|^2}{|\Gamma|}\geq |\O|.$$
Therefore, all the inequalities we used are in fact equalities: in particular the matrix inequality \eqref{sigmeq} and the H\"older inequality. The equality case in \eqref{sigmeq} implies that $D^2u=-\frac{1}{N}\mathbb{I}_N$ in $\Omega$. As in Section \ref{Weinb} we can deduce that $u$ is then quadratic and $\Gamma$ has to be equal to $\de B_R(p_0)\cap\Sigma$ for some point $p_0$. Then $R=\frac{1}{H}$, and the intersection between $\Gamma$ and $\de\Sigma$ is forced to be orthogonal since $u$ solves there the homogeneous Neumann condition. Thus, Lemma \ref{whoisp2} provides again the only two possibilities for the location of $p_0$.
\end{proof}

\section{Starshaped CMC surfaces}\label{Jellett}

\vskip 0.2cm

This section is devoted to the proof of Theorem \ref{unotre}.
\begin{proof}[Proof of Theorem \ref{unotre}]
Let us compute the gradient and the Laplacian of the functions (defined on $\Gamma$)
$$\frac{1}{2}|x|^2\qquad\mbox{ and }\qquad \l(x)=\la x,\nu_x \ra.$$
A direct calculation shows that, for all $x\in\Gamma$,
\begin{equation}\label{gradnorm}
D_{\Gamma}\left(\frac{1}{2}|x|^2\right)=x-\left\langle x,\nu\right\rangle\nu,
\end{equation}
\begin{equation}\label{laplnorm}
\Delta_{\Gamma}\left(\frac{1}{2}|x|^2\right)=(N-1)-(N-1)H\l.
\end{equation}
Moreover, denoting by $h$ the second fundamental form of $\Gamma$, we have, for all $x\in\Gamma$,
\begin{equation}\label{gradlambda}
D_{\Gamma}\lambda=\sum_{i=1}^{N-1}{\left\langle x, \nabla_{e_i}\nu\right\rangle e_i}=\sum_{i,j=1}^{N-1}{\left\langle x, e_j\right\rangle h(e_i,e_j) e_i},
\end{equation}
\begin{equation}\label{laplambda}
\Delta_{\Gamma}\lambda=(N-1)H-\|h\|^2\l.
\end{equation}
The last relation depends on the fact that $H$ is constant, and it can be proved by using Codazzi equations (see also \cite[Chapter 2]{Lopez}). Let us define
$$u= \frac{1}{2}H|x|^2-\l.$$
By \eqref{laplnorm} and \eqref{laplambda}, and since $H$ is constant, we have
\begin{equation}\label{deltaeucl}
\Delta_{\Gamma} u=\Delta \left(\frac{1}{2}H|x|^2-\l\right)=(\|h\|^2-(N-1)H^2)\l\geq0
\end{equation}
which is a consequence of the starshapedness assumption ($\l>0$) and of the matrix inequality \eqref{inequalitymatrixnorm} applied to $h$. The Laplace-Beltrami operator is in divergence form ($\Delta_{\Gamma}=\text{div}(D_{\Gamma})$). By integrating the relation \eqref{deltaeucl} over $\Gamma$, we thus obtain
\begin{equation}\label{partii}
0\leq\int_{\Gamma}{(\|h\|^2-(N-1)H^2)\l\,{\rm{d}}\sigma}=\int_{\Gamma}{\Delta_{\Gamma} u\,{\rm{d}}\sigma}=\int_{\de \Gamma}{\left\langle D_{\Gamma} u, n_x \right\rangle\,{\rm{d}}s}.
\end{equation} 
From \eqref{gradnorm} and \eqref{gradlambda}, we can compute the term 
$$\left\langle D_{\Gamma} u, n \right\rangle= H\left\langle x,n\right\rangle - \sum_{i=1}^{N-1}{\left\langle x, \nabla_{e_i}\nu\right\rangle \left\langle e_i,n\right\rangle }= H\left\langle x,n\right\rangle - \left\langle x,\nabla_{n}\nu\right\rangle.$$
Hence, the gluing integral assumption \eqref{segnointwe} ensures that the integral at the right hand side of \eqref{partii} is nonpositive. In particular, we deduce from \eqref{deltaeucl} that the term $(\|h\|^2-(N-1)H^2)\l$ has to vanish identically on $\Gamma$. Therefore, since $\l>0$, we get  
$$\|h\|^2=(N-1)H^2=\frac{1}{N-1}(\text{trace}(h))^2\qquad\mbox{ on }\Gamma.$$ 
Thus, for any $x\in\Gamma$, the equality case in \eqref{inequalitymatrixnorm} holds true. From \eqref{ugu}, we then have
\begin{equation}\label{umb}
h=H\mathbb{I}_{N-1}.
\end{equation}
Let us show that this implies that $\Gamma$ is a portion of a sphere. For any $k\in\{1,\ldots,N\}$, we denote by $x_k$ and $\nu_k$ the $k$-th components of $x$ and $\nu$ (i.e. $x_k=\left\langle x,\de_k\right\rangle$ and $\nu_k=\left\langle \nu, \de_k\right\rangle$). We can consider the functions
$$v_k=Hx_k-\nu_k.$$
If we compute the gradient, using \eqref{umb} and the fact that $H$ is constant, we get
\begin{eqnarray*}
D_{\Gamma} v_k&=&H\left(\sum_{j=1}^{N-1}{\left\langle e_j,\de_k\right\rangle e_j}\right)-\left(\sum_{i,j=1}^{N-1}{h(e_i,e_j)\left\langle e_j,\de_k\right\rangle e_i}\right)=\\
&=&H\left(\sum_{j=1}^{N-1}{\left\langle e_j,\de_k\right\rangle e_j}\right)-\left(\sum_{i,j=1}^{N-1}{H\delta_{ij}\left\langle e_j,\de_k\right\rangle e_i}\right)=0\qquad\mbox{on }\Gamma.
\end{eqnarray*}
Since $\Gamma$ is connected, this says that $v_k$ is constant in $\Gamma$, that is
$$\mbox{ for any }k\in\{1,\ldots,N\}\,\,\,\mbox{ there exists a constant }c_k\mbox{ such that }v_k\equiv c_k\,\,\,\,\mbox{in }\Gamma.$$
Since $H\in\R\smallsetminus\{0\}$, we have
\begin{eqnarray*}
&&v_k=Hx_k-\nu_k= c_k\,\,\,\,\forall \,k\qquad \Longleftrightarrow\qquad x_k-\frac{1}{H}c_k=\frac{1}{H}\nu_k \,\,\,\,\forall \,k\\
&\Longrightarrow& \sum_{k=1}^{N}{\left(x_k-\frac{1}{H}c_k\right)^2}=\frac{1}{H^2}\qquad \Longleftrightarrow\qquad |x-p_0|^2=\frac{1}{H^2}.
\end{eqnarray*}
Hence $\Gamma$ is contained in a sphere of radius $\frac{1}{|H|}$. Since also the cone $\Sigma$ is connected by definition, we have that, for some point $p_0$, $\Gamma = \de B_{\frac{1}{|H|}}(p_0)\cap\Sigma$ as we wished to prove.
\end{proof}
Note that the integral condition \eqref{segnointwe} can be rewritten as
\begin{equation}\label{altrointug}
\int_{\de\Gamma}{\left(H\left\langle x_\Gamma,n  \right\rangle - h(x_\Gamma,n)\right)\,{\rm{d}}s}\leq 0,
\end{equation}
where $x_\Gamma$ is the projection of the vector $x$ to $T_x\Gamma$. We recall that both $\nu$ and $h$, whenever computed at points of $\de\Gamma$, have to be meant as the exterior normal and the second fundamental form of $\Gamma$ extended up to the boundary thanks to the smoothness (which is always assumed).\\
Under the hypotheses of the previous theorem one cannot provide any information about the location of the point $p_0$ (except that $\de B_r(p_0)$ has to be starshaped with respect to $O$). Indeed, for any sphere $\de B_r(p_0)$ we surely have $h=\frac{1}{r}\mathbb{I}_{N-1}=H\mathbb{I}_{N-1}$ and hence the integrand in \eqref{altrointug} vanishes.

\section{The case of orthogonal intersection}\label{secreg}

\vskip 0.2cm

Throughout this section we will assume that
\begin{equation}\label{ortho}
\Gamma\mbox{ and }\de\Sigma\mbox{ intersect orthogonally at the points of }\de \Gamma=\de \Gamma_1\subset\de\Sigma.
\end{equation}
\begin{proposition}\label{preg}
Fix $\O$ to be a sector-like domain, and let $u$ be the weak solution of \eqref{mixedtype}. If \eqref{ortho} holds, then
$$u\in C^2\left(\overline{\O}\smallsetminus \{O\}\right).$$
\end{proposition}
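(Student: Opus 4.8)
The plan is to establish interior-type elliptic regularity for the mixed boundary value problem \eqref{mixedtype} away from the vertex $O$, where the only genuine difficulty arises along $\de\Gamma=\de\Gamma_1$, i.e. precisely where the Dirichlet boundary $\Gamma$ meets the Neumann boundary $\Gamma_1$. Away from this intersection the solution is automatically smooth by classical theory, and the orthogonality assumption \eqref{ortho} is exactly what cures the mixed-boundary singularity. So first I would recall that, since $-\Delta u=1$ with smooth right-hand side, interior regularity gives $u\in C^\infty(\O)$, and standard boundary regularity for a single boundary condition gives $u\in C^2$ up to the relative interior of $\Gamma$ (Dirichlet) and up to $\Gamma_1\smallsetminus\{O\}$ away from its boundary (Neumann). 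Thus the entire statement reduces to proving $C^2$-regularity in a neighborhood of an arbitrary point $x_0\in\de\Gamma$.

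The key step is a local reflection/flattening argument at $x_0\in\de\Gamma$. Because \eqref{ortho} holds, after a smooth change of coordinates flattening $\de\Sigma$ (locally) to a hyperplane, the surface $\Gamma$ meets this hyperplane orthogonally; the homogeneous Neumann condition on $\Gamma_1$ then becomes a zero-normal-derivative condition across a flat piece of boundary. The natural move is to extend $u$ by even reflection across the flattened $\Gamma_1$. The orthogonality is precisely the compatibility condition that makes the even reflection across the Neumann boundary produce a function solving an elliptic equation (with an $H^1$, indeed smooth, right-hand side after the reflection is accounted for) on a full neighborhood, with the reflected image of $\Gamma$ glued smoothly to $\Gamma$ and now carrying a pure Dirichlet condition. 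In other words, I would turn the mixed problem near $x_0$ into a pure Dirichlet problem on a reflected domain whose boundary $\Gamma\cup\Gamma_1^{\text{refl}}$ is $C^2$ (this $C^2$-gluing of $\Gamma$ and its reflection is exactly where orthogonality is used: a nonorthogonal intersection would create a corner and destroy the regularity). Classical Schauder/elliptic $L^p$ estimates for the Dirichlet problem then yield $u\in C^2$ up to $\de\Gamma$, and restricting back gives the claim near $x_0$; covering $\de\Gamma$ by such neighborhoods completes the argument.

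The main obstacle I expect is making the reflection rigorous when $\de\Sigma$ is only flattened by a diffeomorphism rather than being genuinely flat: the pushed-forward operator is a variable-coefficient divergence-form elliptic operator, and the even reflection across the image of $\Gamma_1$ is only guaranteed to yield an $H^1$ (weak) solution across the interface, not a priori a classical one. One must check that the reflected coefficients and data match up across $\Gamma_1$ with enough regularity (this is where the geometric orthogonality translates into the vanishing of the conormal derivative in the new coordinates, ensuring no singular jump is introduced), and then bootstrap: the reflected weak solution solves an elliptic equation with smooth coefficients and smooth right-hand side in a neighborhood where $\Gamma$ is a $C^2$ Dirichlet boundary, so elliptic regularity upgrades $H^1$ to $C^2$. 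I would therefore organize the proof as: (i) reduce to a neighborhood of a point of $\de\Gamma$; (ii) flatten $\de\Sigma$ and record that orthogonality makes $\Gamma$ perpendicular to the flattened Neumann boundary; (iii) reflect evenly and verify the reflected function is a weak solution of an elliptic Dirichlet problem on a domain with $C^2$ boundary; (iv) invoke elliptic regularity (e.g. \cite{Gri}) to conclude $C^2$ regularity and descend to the original coordinates. The regularity at the vertex $O$ itself is explicitly excluded from the statement and is handled separately via \cite{AJ, Maz}, so it need not enter this proof.
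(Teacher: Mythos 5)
Your proposal is correct and follows essentially the same route as the paper's proof: flatten $\de\Sigma$ near a point of $\de\Gamma$ by a diffeomorphism adapted to the Neumann condition, use the orthogonality \eqref{ortho} to reflect $u$ evenly across the flattened Neumann boundary so that $\Gamma$ and its mirror image glue into a $C^2$ (in fact $C^{2,1}$) Dirichlet boundary, then conclude by elliptic regularity on the reflected domain. The only point to tighten is your remark that the reflected equation has smooth coefficients: the odd-reflected first-order coefficients generically jump across the interface and are merely $L^{\infty}$, which is exactly why the paper bootstraps in two steps ($W^{2,p}$-regularity via \cite[Theorem 9.15]{GT} giving $w\in C^{1,\alpha}$, then Schauder via \cite[Theorem 6.6]{GT}), the jump being harmless because the normal derivative of the reflected solution vanishes on the interface, so the drift term is still $C^{\alpha}$.
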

\begin{proof}
By standard regularity theory $u$ is smooth in $\O\cup \Gamma \cup \Gamma_1\smallsetminus\{O\}$. We have to prove the smoothness up to $\de \Gamma$. So, fix a point $x_0\in\de \Gamma$. Without loss of generality we can assume that, in a neighborhood $U$ of $x_0$, we can write
$$\Sigma\cap U=\{x=(x',x_N)\in U\,:\, x_N>g(x')\}\quad\mbox{and}\quad \de\Sigma\cap U=\{x=(x',x_N)\in U\,:\, x_N=g(x')\}$$
for some smooth function $g$. Consistently, $x_0=(x'_0,g(x'_0))$. Denoting $\nabla'$ the gradient in the $x'-$variables, we can define $\psi: U \mapsto \RN$ as
$$\psi(x)=\psi(x',x_N)=\left(x'-x'_0 -\frac{g(x')-x_N}{1+|\nabla'g(x')|^2}\nabla'g(x'), x_N-g(x')\right).$$
Such a transformation maps $\Sigma\cap U$ into $\{y_N>0\}$, it flattens $\de\Sigma$ locally and behaves well with respect to the homogeneous Neumann condition on $\Gamma_1$ (it was considered e.g. in \cite{APY}). In particular, at the points $x\in \de\Sigma\cap U$ we have
$$\mathcal{J}\psi(x)= \left(\begin{array}{cc}
\mathbb{I}_{N-1} - \frac{\nabla'g(x')\otimes\nabla'g(x')}{1+|\nabla'g(x')|^2} & \frac{\nabla'g(x')}{1+|\nabla'g(x')|^2} \\
-\left(\nabla'g(x')\right)^t &  1
\end{array}\right). $$
Since this is an invertible matrix, we can consider a neighborhood $U_0\subseteq U$ of $x_0$ where $\psi$ is a local diffeomorphism. We can also assume $O\notin U_0$. Identifying $y=\psi(x)$ for $x\in U_0$ and considering, for any smooth function $f$, the corresponding $\tilde{f}=f\circ \psi^{-1}$, one can check that
$$\frac{-1}{1+|\nabla'g(x')|^2}\frac{\de f}{\de \nu}(x)=\frac{\de \tilde{f}}{\de y_N}(y) \qquad\mbox{at the points }x\in \de\Sigma\cap U_0\, \Longleftrightarrow\, y\in \psi(U_0)\cap\{y_N=0\}.$$ 
This says that the orthogonality condition \eqref{ortho} translates into
\begin{equation}\label{trorto}
\psi(\Gamma\cap U_0)\mbox{ and }\{y_N=0\}\mbox{ meet orthogonally at }\psi(\de\Gamma\cap U_0)\subset\{y_N=0\}.
\end{equation}
Let us now define
$$\tilde{\O}=\psi(\O\cap U_0),\quad M_0=\psi(\Gamma\cap U_0),\quad M_1=\psi(\Gamma_1\cap U_0), \quad v(y)=u(\psi^{-1}(y)).$$
From \eqref{mixedtype} we get
$$\begin{cases}
   -L v=1 & \mbox{ in }\tilde{\O}, \\
   v=0 & \mbox{ on }M_0,\\
	\frac{\de v}{\de y_N}=0 & \mbox{ on }M_1
\end{cases},$$
where $L=\tr\left(A(y)D^2\right)+\left\langle b(y), \nabla\right\rangle$ is an elliptic operator in $\psi(U_0)$ (with coefficients which are smooth in a neighborhood). More precisely we have
$$A_{i,j}(y)=\left\langle \nabla\psi_i(x),\nabla\psi_j(x)\right\rangle\quad\mbox{ and }b_k(y)=\Delta\psi_k(x).$$
Note that $A_{i,N}=0$ at ${y_N=0}$ for all $i\in\{1,\ldots,N-1\}$. Hence we can define
\begin{eqnarray*}\mathcal{A}_{i,j}(y)&=&\begin{cases}
   A_{i,j}(y) & \mbox{ if }y\in\tilde{\O} \cup M_1, \\
   A_{i,j}(y',-y_N) & \mbox{ if }(y',-y_N)\in\tilde{\O},
\end{cases}\quad \mbox{ with }i,j\in\{1,\ldots,N-1\}\,\mbox{ or }(i,j)=(N,N),\\
&=&\begin{cases}
   A_{i,j}(y) & \mbox{ if }y\in\tilde{\O}\cup M_1, \\
   -A_{i,j}(y',-y_N) & \mbox{ if }(y',-y_N)\in\tilde{\O}
\end{cases}\quad \mbox{ with either }i=N\,\mbox{ or }j=N;
\end{eqnarray*}
\begin{eqnarray*}\mathfrak{b}_{k}(y)&=&\begin{cases}
   b_{k}(y) & \mbox{ if }y\in\tilde{\O} \cup M_1, \\
   b_{k}(y',-y_N) & \mbox{ if }(y',-y_N)\in\tilde{\O},
\end{cases}\quad \mbox{ with }k\in\{1,\ldots,N-1\},\\
&=&\begin{cases}
   b_{k}(y) & \mbox{ if }y\in\tilde{\O} \cup M_1, \\
   -b_{k}(y',-y_N) & \mbox{ if }(y',-y_N)\in\tilde{\O}
\end{cases}\quad \mbox{ with }k=N;
\end{eqnarray*}
$$w(y)=\begin{cases}
   v(y) & \mbox{ if }y\in\tilde{\O} \cup M_1, \\
   v(y',-y_N) & \mbox{ if }(y',-y_N)\in\tilde{\O}.
\end{cases}$$
In this way we have
$$w\in C^2(\O_{ref}),\quad \mathcal{A}\in C^1(\overline{\O}_{ref}), \quad \mathfrak{b}\in L^\infty(\O_{ref}),\quad \mbox{where}$$
$$\O_{ref}=\left\{y=(y',y_N)\,:\, y\in\tilde{\O} \cup M_1\mbox{ or }(y',-y_N)\in \tilde{\O}\right\}.$$
Moreover, from \eqref{trorto} we have $\de \O_{ref}$ is $C^2$-smooth (actually $C^{2,1}$) and 
$$-\left(\tr\left(\mathcal{A}(\cdot)D^2\right)+\left\langle \mathfrak{b}(\cdot), \nabla\right\rangle\right) w=1\quad\mbox{ in }\O_{ref}.$$
We can deduce from \cite[Theorem 9.15]{GT} that $w\in W^{2,p}(\O_{ref})$ for any $p$, and in particular $w\in C^{1,\alpha}(\overline{\O}_{ref})$. We can thus rewrite the equation as
$$ -\tr\left(\mathcal{A}(\cdot)D^2w\right)\in C^\alpha$$
and then, by \cite[Theorem 6.6]{GT}, $w\in C^{2,\alpha}$ up to the boundary. By construction, we infer that $v\in C^{2,\alpha}$ at $\psi(\de\Gamma\cap U_0)$ which in turn implies $u$ is $C^{2,\alpha}$ up to $x_0$. This proves that $u\in C^2\left(\overline{\O}\smallsetminus \{O\}\right)$.
\end{proof}

The last proposition tells us that, in the special case of the orthogonal intersection, we have not to worry about the regularity at $\de\Gamma$ for the solution of problems \eqref{mixedtype}-\eqref{serrintype}. On the other hand, as mentioned in the Introduction we also have that at the vertex $O$ of a convex cone the $W^{1,\infty} \cap W^{2,2}$-regularity is ensured by the results of \cite{Maz, AJ}. We can then restate Theorem \ref{unouno} in the following way

\begin{theorem}\label{unounoprimo}
Let $c>0$ be fixed, and assume that $\Sigma$ is a convex cone such that $\de\Sigma\smallsetminus\{O\}$ is smooth. Suppose that $\O$ is a sector-like domain, and \eqref{ortho} holds. Then, if the weak solution $u$ of \ref{mixedtype} satisfies $\frac{\de u}{\de\nu}=-c$ on $\Gamma$, we have
$$\O=\Sigma\cap B_{Nc}(p_0),\quad \mbox{ and }\quad u(x)=\frac{N^2c^2-|x-p_0|^2}{2N}.$$
Moreover, one of the following two possibilities holds:
\begin{itemize}
\item[(i)] $p_0=O$;
\item[(ii)] $p_0\in\de\Sigma$ and $\Gamma$ is a half-sphere lying over a flat portion of $\de\Sigma$.
\end{itemize}
\end{theorem}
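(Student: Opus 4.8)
The plan is to reduce this statement directly to Theorem \ref{unouno}, whose conclusion is exactly the one we want; the only real task is to verify that, under the orthogonality hypothesis \eqref{ortho}, the solution $u$ automatically enjoys the regularity $u\in W^{1,\infty}(\O)\cap W^{2,2}(\O)$ that was \emph{assumed} there. First I would observe that the weak solution $u$ of \eqref{mixedtype}, together with the prescribed datum $\frac{\de u}{\de\nu}=-c$ on $\Gamma$, is precisely a solution of the overdetermined system \eqref{serrintype}: the interior equation $-\Delta u=1$, the Dirichlet condition $u=0$ on $\Gamma$, and the Neumann condition on $\Gamma_1\smallsetminus\{O\}$ coincide, and the extra hypothesis supplies the missing fourth condition on $\Gamma$. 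Hence, once the integrability is in place, Theorem \ref{unouno} applies verbatim and yields the quadratic form of $u$, the identification $\O=\Sigma\cap B_{Nc}(p_0)$, and the dichotomy on the location of $p_0$ through Lemma \ref{whoisp2}.

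The regularity away from the vertex is supplied by Proposition \ref{preg}: under \eqref{ortho} one has $u\in C^2\left(\overline{\O}\smallsetminus\{O\}\right)$. In particular $u$ is a classical $C^2(\O)\cap C^1(\Gamma\cup\Gamma_1\smallsetminus\{O\})$-solution, and since $\O$ is bounded and $u$ together with its first and second derivatives extends continuously to $\overline{\O}\smallsetminus\{O\}$, the only region where $W^{1,\infty}$- and $W^{2,2}$-control could fail is a neighborhood of the conical point $O$. Thus the argument localizes entirely to the vertex.

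The core of the matter—and the step I expect to be the main obstacle—is therefore the behavior of $u$ near $O$. Here I would invoke the theory of elliptic boundary value problems in domains with conical points, in the form of the results of Maz'ya \cite{Maz} and \cite{AJ}. For a convex cone the opening at the vertex is wide, so the singular exponents of the associated mixed Dirichlet--Neumann problem on the spherical cap $\omega$ are large enough to force $u$ to have bounded gradient and square-integrable second derivatives in a neighborhood of $O$. The convexity of $\Sigma$ is exactly what excludes the strongly singular homogeneous solutions (those decaying slower than $|x|$) which would otherwise destroy $W^{1,\infty}\cap W^{2,2}$-regularity; pinning down this eigenvalue/exponent threshold, and matching it against the $W^{1,\infty}$ and $W^{2,2}$ requirements, is the delicate point.

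Finally I would patch the two regimes together: the $C^2$-regularity on $\overline{\O}\smallsetminus\{O\}$ from Proposition \ref{preg} and the local $W^{1,\infty}\cap W^{2,2}$-bound near $O$ from \cite{Maz,AJ} combine, via a cutoff separating the vertex from the rest of $\overline{\O}$, to give $u\in W^{1,\infty}(\O)\cap W^{2,2}(\O)$ globally. With every hypothesis of Theorem \ref{unouno} now verified, its conclusion transfers unchanged, which completes the proof.
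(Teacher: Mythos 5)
Your proposal is correct and follows essentially the same route as the paper: the paper obtains Theorem \ref{unounoprimo} exactly by combining Proposition \ref{preg} (which under \eqref{ortho} gives $u\in C^2\left(\overline{\O}\smallsetminus\{O\}\right)$, so the only issue is the vertex) with the $W^{1,\infty}\cap W^{2,2}$-regularity near $O$ supplied by the results of \cite{Maz, AJ} for convex domains, and then invoking Theorem \ref{unouno} verbatim. One small precision: since $\de\Gamma\subset\de\Sigma\smallsetminus\{O\}$ forces $\overline{\Gamma}$ to stay at positive distance from the vertex, near $O$ the problem is a \emph{pure} Neumann problem, so the cited results (Maz'ya on boundedness of first derivatives and Adolfsson--Jerison on $L^p$-bounds for second derivatives for the Neumann problem in convex domains) apply directly, rather than through the mixed Dirichlet--Neumann singular-exponent analysis you sketch.
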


In the same way we can restate Theorem \ref{unodue}. Let us also stress that, under the orthogonality condition, the vector we have denoted by $n_x$ has to be normal to $\de\Sigma$ at every $x\in\de\Gamma$: thus $\left\langle x,n_x  \right\rangle=0$ $\forall x\in\de\Gamma$, which obviously implies \eqref{segnoint}. It means that the following theorem holds

\begin{theorem}\label{unodueprimo}
Fix a convex cone $\Sigma$ such that $\de\Sigma\smallsetminus\{O\}$ is smooth.
Consider a connected, orientable, bounded, and relatively open hypersurface $\Gamma\subset\Sigma$, with non-empty boundary $\de\Gamma\subset\de\Sigma\smallsetminus\{O\}$. Assume that $\Gamma$ is smooth up to its boundary, and that \eqref{ortho} holds. Then, if $\Gamma$ has constant mean curvature $H>0$, we have 
$$\Gamma=\Sigma\cap \de B_{\frac{1}{H}}(p_0).$$
Moreover, one of the following two possibilities holds:
\begin{itemize}
\item[(i)] $p_0=O$;
\item[(ii)] $p_0\in\de\Sigma$ and $\Gamma$ is a half-sphere lying over a flat portion of $\de\Sigma$.
\end{itemize}
\end{theorem}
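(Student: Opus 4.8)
The plan is to observe that Theorem \ref{unodueprimo} is the specialization of Theorem \ref{unodue} to the orthogonal case, so the whole task reduces to verifying that the two hypotheses of Theorem \ref{unodue} are automatically met under \eqref{ortho}. First I would dispose of hypothesis $1)$: at every $x\in\de\Gamma$ the orthogonality condition \eqref{ortho} forces the outward conormal $n_x\in T_x\Gamma$ to be normal to $\de\Sigma$. Since $\Sigma$ is a cone, the position vector $x$ is tangent to $\de\Sigma$ at $x$ (indeed $\langle x,\nu^\Sigma_x\rangle=0$), hence $\langle x,n_x\rangle=0$ for all $x\in\de\Gamma$. Therefore the integral in \eqref{segnoint} vanishes identically and, in particular, is $\leq 0$, so hypothesis $1)$ holds.

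Next I would verify hypothesis $2)$, namely the regularity $u\in W^{1,\infty}(\O)\cap W^{2,2}(\O)$ for the weak solution of the mixed problem \eqref{mixedtype} associated to the sector-like domain $\O$ having $\Gamma$ as its relative boundary. This is precisely where the orthogonal case is better behaved than the general one: by Proposition \ref{preg}, the condition \eqref{ortho} guarantees $u\in C^2\bigl(\overline{\O}\smallsetminus\{O\}\bigr)$, which in particular gives bounded first and second derivatives away from the vertex. The only remaining issue is the behavior near the vertex $O$, where the cone is not smooth; here I would invoke the results of \cite{Maz, AJ} for domains with conical singularities, which ensure the $W^{1,\infty}\cap W^{2,2}$ integrability up to the vertex when the cone is convex. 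Combining the interior-and-lateral regularity from Proposition \ref{preg} with the vertex regularity from \cite{Maz, AJ}, hypothesis $2)$ is satisfied.

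With both hypotheses of Theorem \ref{unodue} verified, the conclusion $\Gamma=\Sigma\cap\de B_{1/H}(p_0)$ together with the dichotomy $(i)$--$(ii)$ for the location of $p_0$ follows immediately by applying that theorem. I expect the main obstacle to be entirely in hypothesis $2)$, and specifically in the regularity at the vertex: Proposition \ref{preg} handles the delicate interaction between the Dirichlet datum on $\Gamma$ and the Neumann datum on $\Gamma_1$ along $\de\Gamma$ via a reflection/flattening argument, but it explicitly excludes $O$, so one must separately control $u$ near the conical point. The verification of hypothesis $1)$, by contrast, is a short geometric computation. An attractive alternative, which I would mention, is that in the orthogonal case Theorem \ref{unodueprimo} admits a proof entirely independent of the PDE \eqref{mixedtype}, obtained by adapting the Montiel--Ros argument \cite{MontRos} for closed CMC surfaces to surfaces with free boundary on the cone; this route sidesteps the regularity discussion altogether and is the cleaner way to see why no extra gluing condition is needed.
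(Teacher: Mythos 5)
Your proposal is correct and follows exactly the route the paper takes: the orthogonality condition forces $n_x$ to be normal to $\de\Sigma$ while $x$ is tangent to the cone, so $\la x,n_x\ra=0$ and \eqref{segnoint} holds trivially, and hypothesis $2)$ is secured by combining Proposition \ref{preg} away from the vertex with the results of \cite{Maz, AJ} at $O$, after which Theorem \ref{unodue} applies verbatim. Even your closing remark matches the paper, which indeed presents the Montiel--Ros adaptation as a second, PDE-free proof of the same theorem.
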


We would like to give another proof of Theorem \ref{unodueprimo} via an adaptation of the proof by Montiel and Ros \cite{MontRos} of the classical Aleksandrov's theorem. We feel it is interesting to notice how in this proof (where the PDE \eqref{mixedtype} never really appears) the convexity assumption of the cone and the orthogonality condition at the intersection play their crucial role.

\begin{proof}
For any $x\in\Gamma$, let us denote by $k_i(x)$ (for $i\in\{1,\ldots,N-1\}$) the principal curvatures of $\Gamma$ at $x$, and by $k_m(x):=\max_{i}{k_i(x)}$. Notice that $k_m(x)\geq H>0$. Following \cite{MontRos}, we put
$$Z:=\left\{(x,t)\in\Gamma\times\R\,:\,0<t\leq\frac{1}{k_m(x)}\right\},\qquad\mbox{and}$$
$$\zeta:Z\mapsto\RN,\quad \zeta(x,t)=x-t\nu_x.$$
Denoting as usual by $\O$ the sector-like domain associated with $\Gamma$, we claim that
\begin{eqnarray}\label{claim}
\O\subseteq\zeta\left(Z\right).
\end{eqnarray}
Once this is established, we can proceed mainly as in \cite{MontRos} and Section \ref{Reilly}. For the reader's convenience, we provide here the main details. Denoting by $\mathcal{J}^Z(x,t)$ the tangential Jacobian of $\zeta$ at $(x,t)\in Z$, we get from \eqref{claim} and from the change of variables formula that
$$|\O|\leq |\zeta(Z)|\leq \int_{Z}{\mathcal{J}^Z(x,t)}=\int_{\Gamma}{\left(\int_{0}^{\frac{1}{k_m(x)}}{\prod_{i=1}^{N-1}\left(1-tk_i(x)\right)\,{\rm d}t}\right)\,{\rm d}\sigma(x)}.$$
Notice that $1-tk_i(x)\geq 0$ by construction. We can now use the arithmetic-geometric mean inequality and we get, since $k_m(x)\geq H$, that
\begin{eqnarray}\label{HK}
|\O|&\leq& \int_{\Gamma}{\left(\int_{0}^{\frac{1}{k_m(x)}}{\left(\sum_{i=1}^{N-1}{\frac{1-tk_i(x)}{N-1}}\right)^{N-1}\,{\rm d}t}\right)\,{\rm d}\sigma(x)}=\int_{\Gamma}{\left(\int_{0}^{\frac{1}{k_m(x)}}{\left(1-t H\right)^{N-1}\,{\rm d}t}\right)\,{\rm d}\sigma(x)}\nonumber\\
&\leq& \int_{\Gamma}{\left(\int_{0}^{\frac{1}{H}}{\left(1-t H\right)^{N-1}\,{\rm d}t}\right)\,{\rm d}\sigma(x)}= \frac{1}{N} \int_{\Gamma}{\frac{1}{H}\,{\rm d}\sigma(x)}=\frac{|\Gamma|}{NH}.
\end{eqnarray}
This is exactly \eqref{reineq}. Then, by arguing as in Section \ref{Reilly} and recalling that $\left\langle x,n_x\right\rangle$ vanishes identically on $\de\Gamma$ by the orthogonality condition, we get
$$\int_{\Gamma} {1-H\left\langle x,\nu \right\rangle\,{\rm{d}}\sigma}=\frac{1}{N-1}\int_{\Gamma} {\Delta_{\Gamma}\left(\frac{1}{2}|x|^2\right)\,{\rm{d}}\sigma}=\frac{1}{N-1}\int_{\de\Gamma}{\left\langle x,n_x  \right\rangle \,{\rm{d}}s}= 0$$
and thus 
$$|\Gamma|=H\int_{\Gamma} {\left\langle x,\nu \right\rangle\,{\rm{d}}\sigma}=H\int_{\de\O} {\left\langle x,\nu \right\rangle\,{\rm{d}}\sigma}=H\int_\O {\div(x)\,{\rm{d}}x}=NH|\O|.$$
Putting this relation into \eqref{HK}, we then realize that
$$|\O|\leq \frac{|\Gamma|}{NH} = |\O|,$$
i.e. we have equality in all the inequalities in \eqref{HK}. In particular there is the equality case in the arithmetic-geometric mean inequality, from which we can deduce that all the principal curvatures are equal to $H$. From here it is easy to infer that $\Gamma$ is a piece of a sphere (see e.g. the details at the end of the proof of Theorem \ref{unotre}).\\
Thus, we are left with the proof of the claim \eqref{claim}. The convexity assumption for the cone $\Sigma$ plays a crucial role for proving such an inclusion. Pick $y\in\O$ and consider $x_y\in\overline{\Gamma}$ such that $|y-x_y|=d(y,\overline{\Gamma})$. Suppose that $x_y$ belong to $\de\Gamma$, and denote by $n_{x_y}\in T_{x_y}\Gamma$ the unit conormal vector to $\de\Gamma$ pointing outwards. The minimality condition at $x_y$ yields
$$\left\langle x_y-y, n_{x_y} \right\rangle = \left\langle \left(\nabla\frac{|x-y|^2}{2}\right)_{|x=x_y}, n_{x_y} \right\rangle = \left\langle \left(\nabla^{\Gamma}\frac{|x-y|^2}{2}\right)_{|x=x_y}, n_{x_y} \right\rangle \leq 0.$$
On the other hand, since $x_y\in\de\Sigma$ and the orthogonality assumptions for $\Gamma$ and $\de\Sigma$, we have $\left\langle x_y,n_{x_y}\right\rangle=0$. Thus we get $\left\langle y, n_{x_y}\right\rangle \geq 0$. But this is impossible: in fact, $y\in\O\subset\Sigma$ and, by the convexity of the cone, $y$ lies inside the half-spaces determined by the supporting hyperplanes to $\de\Sigma$, i.e. $\left\langle y, n_{x_y}\right\rangle < 0$. This contradiction implies that
$$x_y\in\Gamma=\overline{\Gamma}\smallsetminus\de\Gamma.$$
Therefore, the ball $B=B_{|x_y-y|}(y)$ is (locally around $x_y$) inside $\O$ and is tangent to $\de\Gamma$ at $x_y$. Hence $\nu_{x_y}=\frac{x_y-y}{|x_y-y|}$ and
$$y=x_y-|x_y-y|\nu_{x_y}.$$
By comparison we also have $|x_y-y|=\frac{1}{K_B}\leq \frac{1}{k_m(x_y)}$. This yields $(x_y, |x_y-y|)\in Z$ and $y=\zeta(x_y, |x_y-y|)$, and the claim is finally proved.\\
In this way we have proved the existence of some $p_0\in\RN$ for which $\Gamma=\Sigma\cap \de B_{\frac{1}{H}}(p_0)$. As before, Lemma \ref{whoisp2} determines $p_0$.
\end{proof}

We end the paper by proving the counterpart of Theorem \ref{unotre} under the orthogonality assumption. As already anticipated in the Introduction, the integral gluing condition \eqref{segnointwe} is automatically satisfied.

\begin{theorem}\label{unotreprimo}
Let $\Sigma$ be any cone in $\RN$ such that $\Sigma\smallsetminus\{O\}$ is smooth. Suppose that $\Gamma\subset \Sigma$ is a smooth $(N-1)$-dimensional manifold which is relatively open, bounded, orientable, connected and with smooth boundary contained in $\de\Sigma$. Assume that \eqref{ortho} holds and the mean curvature of $\Gamma$ is a constant $H\in \R \smallsetminus\{0\}$. If $\Gamma$ is strictly starshaped with respect to $O$, then we have $\Gamma= \de B_{\frac{1}{|H|}}(p_0)\cap \Sigma$ for some $p_0\in\de\Sigma$.
\end{theorem}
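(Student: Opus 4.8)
The plan is to deduce Theorem \ref{unotreprimo} from the general starshaped result Theorem \ref{unotre} by checking that the orthogonality hypothesis \eqref{ortho} forces the gluing condition \eqref{segnointwe}, and then to pin down the center $p_0$ on $\partial\Sigma$ by a geometric argument that, unlike the general Lemma \ref{whoisp2}, does not use convexity. First I would record the elementary consequence of \eqref{ortho}: along $\partial\Gamma$ the outward conormal $n_x\in T_x\Gamma$ coincides with the outward unit normal $\nu^\Sigma_x$ of the cone, since both are unit vectors in $T_x\Gamma$ orthogonal to $T_x\partial\Gamma$. Because $\partial\Sigma$ is a cone with vertex $O$, the position vector $x$ is tangent to $\partial\Sigma$, hence $\langle x,\nu^\Sigma_x\rangle=0$ and therefore $\langle x,n_x\rangle=0$ on $\partial\Gamma$. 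This already makes the first integral in \eqref{segnointwe} (and the integral \eqref{segnoint} of Theorem \ref{unodue}) vanish identically.

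The delicate term is the second integral in \eqref{segnointwe}. Using $\langle\nabla_n\nu,x\rangle = h(n,x_\Gamma)$ as in the reformulation \eqref{altrointug}, with $x_\Gamma = x-\langle x,\nu\rangle\nu$ the tangential part of $x$, I would first observe that $x_\Gamma\in T_x\partial\Gamma$: indeed both $x$ and $\nu=\nu^\Gamma$ lie in $T_x\partial\Sigma$ under \eqref{ortho}, so $x_\Gamma\in T_x\partial\Sigma\cap T_x\Gamma = T_x\partial\Gamma$. Writing $h^\Sigma$ for the second fundamental form of the cone and exploiting that the cone normal is constant along rays (so its shape operator annihilates the radial direction, $S^\Sigma(x)=0$), differentiating the identity $\langle\nu^\Gamma,\nu^\Sigma\rangle\equiv0$ along $\partial\Gamma$ yields
$$\langle\nabla_n\nu,x\rangle \;=\; h(\nu^\Sigma,x_\Gamma)\;=\;-\,h^\Sigma(x_\Gamma,\nu)\;=\;\langle x,\nu\rangle\, h^\Sigma(\nu,\nu)\qquad\text{on }\partial\Gamma.$$
When $\Sigma$ is convex this is $\geq 0$ by \eqref{defconv} and strict starshapedness ($\langle x,\nu\rangle>0$), so \eqref{segnointwe} holds and Theorem \ref{unotre} applies directly; this already recovers and streamlines the convex case.

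The \textbf{main obstacle} is to secure \eqref{segnointwe} for a general, possibly non-convex, cone, where $h^\Sigma(\nu,\nu)$ has no pointwise sign. Here I would run the Jellett argument behind Theorem \ref{unotre} rather than invoke it as a black box: integrating \eqref{deltaeucl} and inserting the boundary computation above gives the identity
$$\int_{\Gamma}\big(\|h\|^2-(N-1)H^2\big)\langle x,\nu\rangle\,{\rm d}\sigma \;=\; -\int_{\partial\Gamma}\langle x,\nu\rangle\,h^\Sigma(\nu,\nu)\,{\rm d}s,$$
whose left-hand side is nonnegative by the matrix inequality \eqref{inequalitymatrixnorm} and $\langle x,\nu\rangle>0$. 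Thus the boundary integral on the right is forced to be $\leq 0$, and the entire rigidity reduces to proving the reverse inequality for that single boundary term. I expect this to be the crux of the proof; the natural ways to extract it are either through the radial-graph reformulation, in which \eqref{ortho} becomes a homogeneous Neumann condition for the height function over the cross-section $\omega$ and the problem becomes a maximum-principle statement, or through a Minkowski-type identity on the piece $\Gamma_1\subset\partial\Sigma$ bounded by $\partial\Gamma=\partial\Gamma_1$. Once the boundary term is shown to vanish, \eqref{deltaeucl} forces $\|h\|^2=(N-1)H^2$, hence umbilicity, and, exactly as at the end of the proof of Theorem \ref{unotre} in Section \ref{Jellett}, $\Gamma$ is a piece of a sphere of radius $1/|H|$.

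Finally, to place the center I would invoke only Steps I and II of Lemma \ref{whoisp2}, which use orthogonality and the cone property but \emph{not} convexity. Maximizing $\tfrac12|x|^2$ over $\partial\Gamma$ at a point $\bar y$, Lagrange multipliers together with $\langle\bar y,\nu^\Sigma_{\bar y}\rangle=0$ and $\nu^S\perp\nu^\Sigma$ force $\bar y\parallel\nu^S_{\bar y}=\frac{\bar y-p_0}{R}$, so $p_0=t\bar y$ lies on the ray through $O$ and $\bar y$; the triangle-inequality argument of Step II then rules out $t<0$, since it would make every point of $\partial\Gamma$ parallel to $\bar y$, impossible for the $(N-2)$-dimensional manifold $\partial\Gamma$ unless $\Sigma$ is a half-space, in which case $-\partial\Sigma=\partial\Sigma$ and the conclusion holds anyway. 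As $\bar y\in\partial\Sigma$ and $\partial\Sigma$ is scale-invariant, $p_0=t\bar y\in\partial\Sigma$ for $t\geq0$, which is precisely the assertion $p_0\in\partial\Sigma$.
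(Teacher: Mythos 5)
Your reductions up to the crux are correct and coincide with the paper's: under \eqref{ortho} the conormal $n_x$ equals the exterior unit normal of the cone, so $\langle x,n_x\rangle\equiv 0$ on $\partial\Gamma$; your pointwise identity $\langle\nabla_n\nu,x\rangle=h(n,x_\Gamma)=-h^\Sigma(x_\Gamma,\nu)=\langle x,\nu\rangle\,h^\Sigma(\nu,\nu)$ is valid (it uses $x_\Gamma\in T_x\partial\Gamma$ and the fact that the shape operator of a cone annihilates the radial direction); and your localization of $p_0$ via Steps I and II only of Lemma \ref{whoisp2}, which do not use convexity, is exactly what the paper does. But there is a genuine gap precisely where you place it: nothing in your argument proves $\int_{\partial\Gamma}\langle x,\nu\rangle\,h^\Sigma(\nu,\nu)\,{\rm d}s\geq 0$ for a general cone, which is what \eqref{segnointwe} amounts to here. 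The inequality you do derive, namely that this boundary integral is $\leq 0$, is vacuous: it is just the Jellett identity \eqref{partii} combined with $\|h\|^2\geq(N-1)H^2$ and $\langle x,\nu\rangle>0$, i.e.\ a restatement of what the rigidity argument needs as input, not progress toward it. Saying you ``expect'' the reverse inequality to follow from a radial-graph maximum principle or some Minkowski-type identity leaves the theorem unproved --- and since for a non-convex cone $h^\Sigma(\nu,\nu)$ genuinely changes sign along $\partial\Gamma$, no pointwise argument can close it.

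The paper supplies exactly this missing step, through a pair of integral identities both taken on $\Gamma$ itself (not on the lateral piece $\Gamma_1\subset\partial\Sigma$, as one of your suggestions has it). For $N\geq 3$, integrating the tangential field $F=(N-1)H\,x_\Gamma-h(x_\Gamma,\cdot)$ over $\Gamma$ and using Codazzi, $\mathrm{div}_\Gamma F=(N-2)(N-1)H-2\sigma_2(h)\langle x,\nu\rangle$, yields \eqref{mink2r}, which relates the boundary integral $\int_{\partial\Gamma}h(x_\Gamma,n)\,{\rm d}s$ (the term $H\langle x_\Gamma,n\rangle$ already vanishes by orthogonality) to the interior quantity $\int_\Gamma H-\tbinom{N-1}{2}^{-1}\sigma_2(h)\langle x,\nu\rangle$. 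The second ingredient is the Hsiung-type Minkowski formula \eqref{mink2}, proved in \cite[Proposition 1]{CP} by Hsiung's argument in \cite{H}, exploiting exactly $\langle x,n\rangle\equiv0$ on $\partial\Gamma$: it says this interior quantity is zero. Combining the two gives $\int_{\partial\Gamma}h(x_\Gamma,n)\,{\rm d}s=0$, so \eqref{segnointwe} holds (in fact with equality) and Theorem \ref{unotre} applies; the case $N=2$, where \eqref{mink2r} degenerates through the factor $N-2$, is trivial since there $x_\Gamma=0$ on $\partial\Gamma$ (consistent with your formula, as $h^\Sigma\equiv0$ on a one-dimensional cone boundary). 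This second Minkowski formula is the mathematical heart of the theorem, and it is exactly what your outline lacks.
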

\begin{proof} We shall prove that the integral in \eqref{altrointug} vanishes under the orthogonality assumption. We recall that $x_\Gamma$ stands for the projection of $x$ in the directions which are tangent to $\Gamma$. The $N=2$ case is trivial since $x$ is forced to be parallel to $\nu$ at $\de\Gamma$. Thus we assume $N\geq 3$. Denoting by
$$F=(N-1)H x_\Gamma - h(x_\Gamma,\cdot)=\sum_{j,l=1}^{N-1}\left(h(e_l,e_l) \left\langle x,e_j \right\rangle - h(e_l,e_j)\left\langle x,e_l \right\rangle \right)e_j$$
where $\{e_j\}$ is a orthonormal frame of $\Gamma$, it is known that
$$\div_\Gamma\left( F \right)= (N-2)(N-1)H-2\sigma_2(h)\left\langle x,\nu\right\rangle.$$
This is in fact an application of Codazzi equations, and it is one possible way to prove the Minkowski formula relating $H$ and $\sigma_2(h)$ (in our notations of Section \ref{prem}, the term $\sigma_2$ is not normalized). This implies that
\begin{equation}\label{mink2r}
\int_\Gamma{H-\frac{1}{\binom{N-1}{2}}\sigma_2(h)\left\langle x,\nu\right\rangle}=\frac{1}{N-2}\int_{\de\Gamma}{\left(H\left\langle x_\Gamma,n  \right\rangle - \frac{1}{N-1} h(x_\Gamma,n)\right)\,{\rm{d}}s}.
\end{equation}
On the other hand, we know that the orthogonality condition implies that $n$ is normal to the cone and then $\left\langle x_\Gamma,n\right\rangle = \left\langle x,n\right\rangle = 0$. Moreover, again by the orthogonality condition, it holds
\begin{equation}\label{mink2}
\int_\Gamma{H-\frac{1}{\binom{N-1}{2}}\sigma_2(h)\left\langle x,\nu\right\rangle}=0.
\end{equation}
Such a Minkowski formula is in fact proved in \cite[Proposition 1]{CP} by using Hsiung's original argument in \cite{H} and exploiting $\left\langle x,n\right\rangle \equiv 0$ on $\de\Gamma$. Combining \eqref{mink2r} and \eqref{mink2}, we deduce that
$$\int_{\de\Gamma}{h(x_\Gamma,n)}=0,$$ 
so that the integral in \eqref{altrointug} vanishes. Therefore, the proof of Theorem \ref{unotre} can be carried out and $\Gamma= \de B_{\frac{1}{|H|}}(p_0)\cap\Sigma$ for some $p_0\in\RN$. By making use of the orthogonality condition we then can follow the first two steps of the proof of Lemma \ref{whoisp2} (we recall that we are not assuming the $\Sigma$ is convex), and deduce that $p_0\in\de\Sigma$.
\end{proof}

\bibliographystyle{amsplain}

\end{document}